\newcommand{\donothing}[1]{}
\newcommand{\Comment}{\donothing}
\newcommand{\Commentsoff}{\renewcommand{\Comment}{\donothing}}
\def\subsection{\@startsection{subsection}{2}%
  \z@{.5\linespacing\@plus.7\linespacing}{.5\linespacing}%
  {\normalfont\bfseries}}
\newtheorem{definition}{Definition}[section]
\newtheorem{lemma}[definition]{Lemma}
\newtheorem{proposition}[definition]{Proposition}
\newtheorem{corollary}[definition]{Corollary}
\newtheorem{theorem}[definition]{Theorem}
\newtheorem{remark}[definition]{Remark}
\newtheorem{example}[definition]{Example}
\DeclareMathOperator*{\colim}{colim}
\newcommand{\ra}{\to}
\DeclareMathOperator{\obj}{Obj}
\DeclareMathOperator{\im}{Im}
\DeclareMathOperator{\id}{id}
\DeclareMathOperator{\card}{card}
\newcommand{\Diff}{{\mathfrak{D}\mathrm{iff}}}
\newcommand{\Top}{{\mathfrak{T}\mathrm{op}}}
\newcommand{\Set}{{\mathfrak{S}\mathrm{et}}}
\newcommand{\DS}{{\mathcal{DS}}}
\newcommand{\cD}{{\mathcal{D}}}
\newcommand{\cM}{{\mathcal{M}}}
\newcommand{\oo}{\infty}
\newcommand{\sse}{\subseteq}
\newcommand{\Ci}{C^{\infty}}
\def \N{\mathbb{N}}
\def \Z{\mathbb{Z}}
\def \Q{\mathbb{Q}}
\def \R{\mathbb{R}}
\newcommand{\blank}{-}
\newcommand{\dfn}[1]{\textbf{#1}}
\ifpdf  \usepackage[pdftex,bookmarks=false]{hyperref}
\else   \usepackage[hypertex]{hyperref}
\begin{document}

\title{The $D$-topology for diffeological spaces}
\author{J. Daniel Christensen}
\address{Department of Mathematics,
University of Western Ontario,
London, ON N6A 5B7, Canada}
\email{jdc@uwo.ca}
\author{Gord Sinnamon}
\address{Department of Mathematics,
University of Western Ontario,
London, ON N6A 5B7, Canada}
\email{sinnamon@uwo.ca}
\author{Enxin Wu}
\address{Faculty of Mathematics, 
University of Vienna, 
Oskar-Morgenstern-Platz 1,
1090 Vienna, Austria}
\email{enxin.wu@univie.ac.at}
\date{September 14, 2015}
\subjclass[2010]{57P99 (primary), 58D99, 57R99 (secondary).}

\begin{abstract}
Diffeological spaces are generalizations of smooth manifolds which include
singular spaces and function spaces.
For each diffeological space, Iglesias-Zemmour introduced a natural topology called the $D$-topology.
However, the $D$-topology has not yet been studied seriously in the existing literature.
In this paper, we develop the basic theory of the $D$-topology for diffeological spaces.
We explain that the topological spaces that arise as the $D$-topology of a diffeological space
are exactly the $\Delta$-generated spaces and give results and examples
which help to determine when a space is $\Delta$-generated.
Our most substantial results show how the $D$-topology on the function
space $C^{\infty}(M,N)$ between smooth manifolds compares to other well-known topologies.
\end{abstract}

\keywords{diffeological space, $D$-topology, topologies on function spaces,
$\Delta$-generated spaces}

\maketitle

\tableofcontents

\section{Introduction}

Smooth manifolds are some of the most important objects in mathematics.
They contain a wealth of geometric information, such as
tangent spaces, tangent bundles, differential forms, de~Rham cohomology, etc., and
this information can be put to great use in proving theorems and making calculations.
However, the category of smooth manifolds and smooth maps
is not closed under many useful constructions,
such as subspaces, quotients, function spaces, etc.
On the other hand, various convenient categories of topological spaces
are closed under these constructions,
but the geometric information is missing.
Can we have the best of both worlds?

Since the 1970's, the category of smooth manifolds has been enlarged in several
different ways to a well-behaved category as described above, and these
approaches are nicely summarized and compared in~\cite{St}.
In this paper, we work with diffeological spaces, 
which were introduced by J.~Souriau in the early 1980s~\cite{So1,So2},
and in particular we study the natural topology that any diffeological space has.

A diffeological space is a set $X$ along with a specified set of maps $U \to X$
for each open set $U$ in $\R^n$ and each $n \in \N$, satisfying a presheaf condition,
a sheaf condition, and a non-triviality condition (see Definition~\ref{de:diffeological-space}).
Given a diffeological space $X$, the $D$-topology on $X$ is the largest topology making
all of the specified maps $U \to X$ continuous.
In this paper, we make the first detailed study of the $D$-topology.
Our results include theorems giving properties and characterizations of
the $D$-topology as well as many examples which show the behaviour that
can occur and which rule out some natural conjectures.

Our interest in these topics comes from several directions.
First, it is known~\cite{SYH} that the topological spaces which arise as the $D$-topology of
diffeological spaces are precisely the $\Delta$-generated spaces, which were introduced by
Jeff Smith as a possible convenient category for homotopy theory and were studied by~\cite{D,FR}.
Some of our results help to further understand which spaces are $\Delta$-generated,
and we include illustrative examples.

Second, for any diffeological spaces $X$ and $Y$, the set $C^{\infty}(X,Y)$ of smooth maps
from $X$ to $Y$ is itself a diffeological space in a natural way and thus can be endowed
with the $D$-topology.
Since the topology arises completely canonically, it is instructive to compare
it with other topologies that arise in geometry and analysis when $X$ and $Y$
are taken to be smooth manifolds.
A large part of this paper is devoted to this comparison, and again
we give both theorems and illustrative examples.

Finally, this paper arose from work on the homotopy theory of diffeological
spaces~\cite{CW}, and can be viewed as the topological groundwork for this project.
It is for this reason that we need to focus on an approach that produces
a well-behaved category, rather than working with a theory of infinite-dimensional
manifolds, such as the one thoroughly developed in the book~\cite{KM}.
We will, however, make use of results from~\cite{KM}, as many of the underlying
ideas are related.

\medskip

Here is an outline of the paper, with a summary of the main results:

In Section~\ref{se:background}, we review some basics of diffeological spaces.
For example, we recall that
the category of diffeological spaces is complete, cocomplete and cartesian closed,
and that it contains the category of smooth manifolds as a full subcategory.
Moreover, like smooth manifolds, every diffeological space is formed by gluing together open subsets of $\R^n$,
with the difference that $n$ can vary and that the gluings are not necessarily via diffeomorphisms.

In Section~\ref{se:D-topology}, we study the $D$-topology of a diffeological space,
which was introduced by Iglesias-Zemmour in~\cite{I1}.
We show that the $D$-topology is determined by the smooth curves (Theorem~\ref{th:curves}),
while diffeologies are not (Example~\ref{diffeology}).
We recall a result of~\cite{SYH} which says
that the topological spaces arising as the $D$-topology of a diffeological space
are exactly the $\Delta$-generated spaces (Proposition~\ref{delta}).
We give a necessary condition and a sufficient condition
for a space to be $\Delta$-generated (Propositions~\ref{La} and~\ref{countable})
and show that neither is necessary and sufficient (Proposition~\ref{pr:fc} and Example~\ref{lpc}).
We can associate two topologies to a subset of a diffeological space.
We discuss some conditions under which the two topologies coincide
(Lemmas~\ref{l1} and~\ref{l2}, Proposition~\ref{pr:topologies-agree}, and Corollary~\ref{D-open}).

Section~\ref{se:function-spaces} contains our most substantial results.
We compare the $D$-topology on function spaces between smooth manifolds with other well-known topologies.
The results are (1) the $D$-topology is almost always strictly finer
than the compact-open topology (Proposition~\ref{pr:D-vs-compact-open} and Example~\ref{compare});
(2) the $D$-topology is always finer than the weak topology (Proposition~\ref{weakvsD}) 
and always coarser than the strong topology (Theorem~\ref{th:D-vs-strong});
(3) we give a full characterization of the
$D$-topology as the smallest $\Delta$-generated topology containing
the weak topology (Theorem~\ref{conj:D-top});
(4) as a consequence, we show that the weak topology is equal to the $D$-topology
if and only if the weak topology is locally path-connected (Corollary~\ref{iff:weak=D});
(5) in particular, when the codomain is $\R^n$ or the domain is compact,
the $D$-topology coincides with the weak topology
(Corollary~\ref{cor:N=R} and Corollary~\ref{M-cpt}),
but not always (Example~\ref{weak-neq-D}).
\medskip

All smooth manifolds in this paper are assumed to be Hausdorff, finite-dimensional, second countable and without boundary.
\medskip

We would like to thank Andrew Stacey and
Chris Schommer-Pries for very helpful conversations, 
and Jeremy Brazas for the idea behind Example~\ref{lpc}.

\section{Background on diffeological spaces}\label{se:background}

Here is some background on diffeological spaces.
While we often cite early sources, almost all of the material
in this section is in the book~\cite{I2}, which we recommend as a good reference.

\begin{definition}[\cite{So2}]\label{de:diffeological-space}
A \dfn{diffeological space} is a set $X$
together with a specified set $\mathcal{D}_X$ of functions $U \ra X$ (called \dfn{plots})
for each open set $U$ in $\R^n$ and for each $n \in \N$,
such that for all open subsets $U \subseteq \R^n$ and $V \subseteq \R^m$:
\begin{enumerate}
\item (Covering) Every constant map $U \ra X$ is a plot;
\item (Smooth Compatibility) If $U \ra X$ is a plot and $V \ra U$ is smooth,
then the composition $V \ra U \ra X$ is also a plot;
\item (Sheaf Condition) If $U=\cup_i U_i$ is an open cover
and $U \ra X$ is a set map such that each restriction $U_i \ra X$ is a plot,
then $U \ra X$ is a plot.
\end{enumerate}
We usually use the underlying set $X$ to denote the diffeological space $(X,\mathcal{D}_X)$.
\end{definition}

\begin{definition}[\cite{So2}]
Let $X$ and $Y$ be two diffeological spaces,
and let $f:X \rightarrow Y$ be a set map.
We say that $f$ is \dfn{smooth} if for every plot $p:U \ra X$ of $X$,
the composition $f \circ p$ is a plot of $Y$.
\end{definition}

The collection of all diffeological spaces and smooth maps forms a category,
which we denote $\Diff$.
Given two diffeological spaces $X$ and $Y$,
we write $C^\infty(X,Y)$ for the set of all smooth maps from $X$ to $Y$.
An isomorphism in $\Diff$ will be called a \dfn{diffeomorphism}.

Every smooth manifold $M$ is canonically a diffeological space
with the same underlying set and plots taken to be all smooth maps $U \ra M$ in the usual sense.
We call this the \dfn{standard diffeology} on $M$.
By using charts, it is easy to see that smooth maps in the usual sense between smooth manifolds
coincide with smooth maps between them with the standard diffeology.
This gives the following standard result, which can be found, for
example, in Section~4.3 of~\cite{I2}.

\begin{theorem}
There is a fully faithful functor from the category of smooth manifolds to $\Diff$.
\end{theorem}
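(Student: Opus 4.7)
The plan is to define the functor on objects by equipping each smooth manifold $M$ with the diffeology $\mathcal{D}_M$ consisting of all maps $p:U\to M$ (with $U\sse\R^n$ open) that are smooth in the usual manifold sense, and on morphisms by sending a smooth map $f:M\to N$ to itself, viewed as a map of underlying sets. First I would check that $\mathcal{D}_M$ really is a diffeology: the covering axiom is immediate since constant maps into $M$ are smooth; smooth compatibility follows because the composition of a smooth map $V\to U$ with a smooth map $U\to M$ is smooth; and the sheaf condition is exactly the well-known fact that smoothness for maps into a manifold is a local property. This gives a well-defined diffeological space for every manifold.

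Next I would verify functoriality. If $f:M\to N$ is smooth in the usual sense and $p:U\to M$ is a plot of $(M,\mathcal{D}_M)$, then $f\circ p:U\to N$ is a composition of smooth maps, hence smooth, hence a plot of $(N,\mathcal{D}_N)$. So the assignment $f\mapsto f$ lands in $C^\infty((M,\mathcal{D}_M),(N,\mathcal{D}_N))$, and it is clear that it respects identities and composition.

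To see that the functor is faithful, observe that on hom-sets it is simply the identity map on underlying set-theoretic functions, which is injective. The content is fullness: I need to show that any diffeologically smooth map $f:(M,\mathcal{D}_M)\to (N,\mathcal{D}_N)$ is smooth as a map of manifolds. Given a point $x\in M$, pick a chart, i.e.\ a diffeomorphism $\varphi:U\to V$ from an open $U\sse\R^n$ onto an open neighbourhood $V$ of $x$ in $M$. Composing with the inclusion gives a plot $\varphi:U\to M$ of $\mathcal{D}_M$, so by assumption $f\circ\varphi:U\to N$ is a plot of $\mathcal{D}_N$, which by definition means it is smooth in the usual sense. Since this holds for charts covering $M$, the map $f$ is smooth as a map of manifolds.

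I do not expect any serious obstacle; the one point that deserves care is fullness, and the key observation is that the smoothness of a map between manifolds can be detected by precomposition with a family of charts, which are exactly the plots provided by $\mathcal{D}_M$. This is why the manifold diffeology captures classical smoothness on the nose.
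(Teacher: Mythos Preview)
Your proof is correct and follows essentially the same approach as the paper: define the standard diffeology on a manifold via the usual smooth maps from open subsets of Euclidean space, and then use charts to see that diffeological smoothness between manifolds coincides with classical smoothness. The paper's proof is much terser, merely asserting that ``by using charts, it is easy to see'' the two notions of smooth map agree, whereas you have spelled out the verification of the diffeology axioms and the fullness argument explicitly.
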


From now on, unless we say otherwise,
every smooth manifold considered as a diffeological space is equipped with the standard diffeology.

\begin{proposition}[\cite{I1}]
Given a set $X$,
let $\mathcal{D}$ be the set of all diffeologies on $X$ ordered by inclusion.
Then $\mathcal{D}$ is a complete lattice.
\end{proposition}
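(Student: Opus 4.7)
The plan is to show that arbitrary meets exist in $\mathcal{D}$ by checking that the intersection of any family of diffeologies is again a diffeology, and then to deduce the existence of arbitrary joins from the existence of a maximum element together with the fact that arbitrary meets exist.

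First I would fix a family $\{\mathcal{D}_i\}_{i \in I}$ of diffeologies on $X$ and set $\mathcal{D}_\infty := \bigcap_{i \in I} \mathcal{D}_i$. I would verify the three axioms of Definition~\ref{de:diffeological-space} for $\mathcal{D}_\infty$ in order: the covering axiom holds because every constant map $U \to X$ lies in each $\mathcal{D}_i$ and hence in the intersection; smooth compatibility holds because if $p \colon U \to X$ is in every $\mathcal{D}_i$ and $V \to U$ is smooth, then the composition $V \to X$ is in every $\mathcal{D}_i$ and hence in $\mathcal{D}_\infty$; and the sheaf condition holds because if each restriction to an open cover lies in $\mathcal{D}_\infty$, it lies in every $\mathcal{D}_i$, so the global map lies in every $\mathcal{D}_i$ by the sheaf condition for each, and thus in $\mathcal{D}_\infty$. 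This shows that $\mathcal{D}_\infty \in \mathcal{D}$, and it is clearly the greatest lower bound of $\{\mathcal{D}_i\}$ under inclusion.

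Next I would observe that the collection $\mathcal{D}_{\max}$ consisting of \emph{all} set maps $U \to X$, for every open $U \subseteq \R^n$ and every $n \in \N$, is trivially a diffeology (each axiom is vacuous since there is no restriction on which maps are plots). Thus $\mathcal{D}$ has a maximum element. To construct the join of an arbitrary family $\{\mathcal{D}_i\}$, I would consider the collection of all diffeologies containing $\bigcup_i \mathcal{D}_i$; this collection is nonempty because it contains $\mathcal{D}_{\max}$, so its intersection (which is a diffeology by the first step) is well-defined and is the least upper bound of $\{\mathcal{D}_i\}$.

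I do not expect any substantive obstacle: the proof reduces to a direct verification. The only conceptual point worth flagging is the existence of the top element $\mathcal{D}_{\max}$, which is what guarantees that the "intersection of all upper bounds" construction for joins is not an intersection over an empty family.
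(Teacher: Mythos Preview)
Your proposal is correct and follows essentially the same approach as the paper: the paper simply asserts that $\mathcal{D}$ is closed under arbitrary intersection and names the top element (the indiscrete diffeology), while you spell out the verification of the three axioms and the standard construction of joins from meets plus a top element. The only difference is level of detail; the underlying argument is the same.
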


This follows from the fact that $\mathcal{D}$ is closed under arbitrary intersections.
The largest element in $\mathcal{D}$ is called the \dfn{indiscrete diffeology} on $X$,
for which every function $U \ra X$ is a plot,
and the smallest element in $\mathcal{D}$ is called the \dfn{discrete diffeology} on $X$,
for which the plots $U \to X$ are the locally constant maps.

The smallest diffeology on $X$ containing a set of 
maps $A=\{U_i \ra X\}_{i \in I}$ is called the diffeology \dfn{generated} by $A$.
It consists of all maps $f:V \ra X$
such that there exists an open cover $\{V_j\}$ of $V$
with the property that $f$ restricted to each $V_j$ is either constant or factors through some element 
$U_i \ra X$ in $A$ via a smooth map $V_j \ra U_i$.
The standard diffeology on a smooth manifold is generated by any smooth atlas on the manifold.
For every diffeological space $X$, $\mathcal{D}_X$ is generated by 
$\cup_{n \in \N} C^\infty(\R^n,X)$.

Generalizing the previous paragraph,
let $A=\{f_j:X_j \ra X\}_{j \in J}$ be a set of functions from some 
diffeological spaces to a fixed set $X$.
Then there exists a smallest diffeology on $X$ making all $f_j$ smooth,
and we call it the \dfn{final diffeology} defined by $A$.
For a diffeological space $X$ with an equivalence relation $\sim$,
the final diffeology defined by the quotient map $\{X \twoheadrightarrow X/{\sim}\}$
is called the \dfn{quotient diffeology}.
Similarly, let $B=\{g_k:Y \ra Y_k\}_{k \in K}$ be a set of functions 
from a fixed set $Y$ to some diffeological spaces.
Then there exists a largest diffeology on $Y$ making all $g_k$ smooth,
and we call it the \dfn{initial diffeology} defined by $B$.
For a diffeological space $X$ and a subset $A$ of $X$,
the initial diffeology defined by the inclusion map $\{A \hookrightarrow X\}$ 
is called the \dfn{subset diffeology}.
More generally, we have the following well-known result:

\begin{theorem}
The category $\Diff$ is both complete and cocomplete.
\end{theorem}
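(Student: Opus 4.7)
The plan is to verify completeness and cocompleteness by constructing limits and colimits explicitly, exploiting the initial and final diffeology constructions just introduced. Since every small limit (resp. colimit) can be built from products and equalizers (resp. coproducts and coequalizers), it suffices to construct these four basic (co)limits; alternatively, and equivalently, one can treat arbitrary diagrams in one stroke via the forgetful functor $U \colon \Diff \to \Set$, which I expect to create both limits and colimits.

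For limits, given a small diagram $F \colon J \to \Diff$, I would first form the limit $L = \lim_{j} U F(j)$ in $\Set$, with projection maps $\pi_j \colon L \to F(j)$, and then endow $L$ with the initial diffeology defined by the family $\{\pi_j\}_{j\in\obj J}$. Using the universal property of the initial diffeology, one checks that (i) the $\pi_j$ are smooth; (ii) they form a cone, since the naturality squares in $\Set$ are strict and smoothness of structure maps $F(j) \to F(j')$ ensures the cone lifts to $\Diff$; and (iii) given any cone $\{q_j \colon Y \to F(j)\}$ in $\Diff$, the unique set-theoretic factorization $q \colon Y \to L$ is smooth because each $\pi_j \circ q = q_j$ is smooth, which by the defining property of the initial diffeology forces $q$ itself to be smooth. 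In particular this yields products (take $J$ discrete and the initial diffeology with respect to the coordinate projections on $\prod X_i$) and equalizers (take $E = \{x \in X : f(x)=g(x)\}$ with the sub-diffeology from $E \hookrightarrow X$).

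For colimits, the dual construction works: form the colimit in $\Set$ and equip it with the final diffeology defined by the structure maps from the $F(j)$. This immediately gives coproducts (the disjoint union with the final diffeology relative to the canonical inclusions, which in this case just means a map out of $\coprod X_i$ is a plot iff it is locally a plot of some $X_i$) and coequalizers (the quotient $Y / \sim$, where $\sim$ is the equivalence relation generated by $f(x) \sim g(x)$, with the quotient diffeology). The universal property is verified dually: a set map out of the colimit is smooth iff each precomposition with a structure map is smooth, which is exactly what the final diffeology provides.

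The argument involves no real obstacle, since the hard work has been done in establishing that initial and final diffeologies exist (this relies on the complete-lattice structure of diffeologies on a fixed set, proved earlier). The only point requiring a small amount of care is verifying that the cone/cocone constructed in $\Set$ actually lies in $\Diff$, i.e.\ that the projections/injections into or out of the underlying set-theoretic (co)limit are smooth for the initial/final diffeology we have chosen; but this is automatic from the very definition of those diffeologies. Thus both $\Diff$ is complete and $\Diff$ is cocomplete, and moreover the forgetful functor $U$ preserves and creates all small limits and colimits.
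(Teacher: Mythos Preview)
Your proposal is correct and follows essentially the same approach as the paper: form the (co)limit of the underlying sets and equip it with the initial (resp.\ final) diffeology determined by the projection (resp.\ structure) maps. The only minor difference is that the paper observes that the forgetful functor to $\Set$ has both a left and a right adjoint (the discrete and indiscrete diffeologies), which immediately explains why it preserves limits and colimits, whereas you verify the universal property directly from the definition of initial/final diffeologies; both routes are standard and equivalent.
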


This is proved in~\cite{BH}, but can be found implicitly in earlier work.
We give a brief sketch here.
The forgetful functor $\Diff \ra \Set$ to the category of sets preserves both
limits and colimits since it has both left and right adjoints, 
given by the discrete and indiscrete diffeologies.
The diffeology on the (co)limit is the initial (final) diffeology defined by
the natural maps.
In more detail, let $F: J \ra \Diff$ be a functor from a small category $J$
and write $\bar{F}$ for the composite $J \ra \Diff \ra \Set$.
Then $U \ra \lim \bar{F}$ is a plot if and only if the composite
$U \ra \lim \bar{F} \ra \bar{F}(j)$ is a plot of $F(j)$ for each $j \in \obj (J)$.
It is not hard to check directly that $\lim \bar{F}$ with this diffeology is $\lim F$.
Similarly, $p: U \ra \colim \bar{F}$ is a plot if and only if there is an open cover
$\{U_i\}$ of $U$ such that the restriction $p|_{U_i}$ factors as
$U_i \ra \bar{F}(j) \ra \colim \bar{F}$ for some $j \in \obj (J)$, with the first map
a plot of $F(j)$.
It is not hard to check directly that $\colim \bar{F}$ with this diffeology is $\colim F$.

The category of diffeological spaces also enjoys another convenient property:

\begin{theorem}[\cite{I1}]\label{th:cartesian-closed}
The category $\Diff$ is cartesian closed.
\end{theorem}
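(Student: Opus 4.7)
The plan is to exhibit an explicit internal hom and then verify the exponential adjunction directly, rather than invoking abstract nonsense.

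First I would put a diffeology on the set $C^\infty(Y,Z)$ by declaring a map $p:U \to C^\infty(Y,Z)$ to be a plot if and only if its adjoint $\hat p : U\times Y \to Z$, defined by $\hat p(u,y)=p(u)(y)$, is smooth. Here $U\times Y$ carries the product diffeology coming from the completeness of $\Diff$ established above; concretely a plot of $U\times Y$ is a map $q:V\to U\times Y$ such that $\pi_U\circ q$ is smooth in the usual sense and $\pi_Y\circ q$ is a plot of $Y$. The three axioms of Definition~\ref{de:diffeological-space} for this candidate functional diffeology are then routine: the covering axiom is immediate because a constant $p$ yields $\hat p$ factoring through a point; the smooth compatibility axiom follows because precomposing $p$ with a smooth $V\to U$ corresponds to precomposing $\hat p$ with (that map)$\times \id_Y$, which is smooth; and the sheaf axiom is where one uses that smoothness of $\hat p$ can be checked on the open cover $\{U_i\times Y\}$ of $U\times Y$.

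Next I would verify that the evaluation map $\mathrm{ev}:C^\infty(Y,Z)\times Y\to Z$ is smooth, which by definition means that for every plot $(p,q):V\to C^\infty(Y,Z)\times Y$ the composite $v\mapsto p(v)(q(v))$ is a plot of $Z$. But this composite factors as $\hat p\circ(\id_V,q)$ where $\hat p$ is smooth by definition of $p$ being a plot, and $(\id_V,q)$ is a plot of $V\times Y$, so we are done by smooth compatibility in $Z$.

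For the adjunction, given $f\in C^\infty(X\times Y,Z)$ define its transpose $\tilde f:X\to C^\infty(Y,Z)$ by $\tilde f(x)(y)=f(x,y)$. I would check two things: (a) for each $x$, the map $\tilde f(x)$ is smooth, since it is the composite of $y\mapsto(x,y)$ (a plot of $X\times Y$ whenever the plot of $Y$ is one) with $f$; (b) $\tilde f$ itself is smooth, because if $r:U\to X$ is any plot then the adjoint of $\tilde f\circ r$ is $f\circ(r\times\id_Y)$, which is smooth as a composite of smooth maps. The inverse assignment sends $g:X\to C^\infty(Y,Z)$ to $(x,y)\mapsto g(x)(y)$, which is smooth because on a plot $(r,s):U\to X\times Y$ it becomes $u\mapsto g(r(u))(s(u))=\mathrm{ev}\circ(g\circ r,s)$, and $g\circ r$ is a plot of $C^\infty(Y,Z)$ by hypothesis. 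Naturality in $X$ and $Z$ is then manifest from the definitions.

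The main technical point, and essentially the only place where the definition could fail, is the sheaf axiom for the functional diffeology: one has to be sure that local smoothness of $\hat p$ on the cover $\{U_i\times Y\}$ implies global smoothness on $U\times Y$. This follows because smoothness of a map out of $U\times Y$ is tested on plots, and every plot of $U\times Y$ locally lands in some $U_i\times Y$ by continuity of its first component. Once this is in hand, the remaining verifications are formal consequences of the definitions together with the product construction furnished by Theorem~\ref{th:cartesian-closed}'s preceding completeness result.
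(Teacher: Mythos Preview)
Your argument is correct and is exactly the standard verification. The paper itself does not supply a proof: Theorem~\ref{th:cartesian-closed} is stated in the background section (with the blanket reference to~\cite{I2}), and what follows the theorem is only the definition of the functional diffeology and the assertion that $-\times Y \dashv C^\infty(Y,-)$, with no further details. So your proposal is in fact more complete than what the paper provides, but the definition you give and the adjunction you check are identical to the ones the paper records.
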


Given two diffeological spaces $X$ and $Y$,
the set of maps $\{U \ra C^\infty(X,Y) \mid U \times X \ra Y$ is smooth$\}$ 
forms a diffeology on $C^\infty(X,Y)$.
We call it the \dfn{functional diffeology} on $C^\infty(X,Y)$,
and we always equip hom-sets with the functional diffeology.
Furthermore, for each diffeological space $Y$,
$\blank \times Y:\Diff \rightleftharpoons \Diff :C^\infty(Y,\blank)$ is an adjoint pair.
\medskip

A smooth manifold of dimension $n$ is formed by gluing together
some open subsets of $\R^n$ via diffeomorphisms.
A diffeological space is also formed by gluing together open subsets
of $\R^n$ (with the standard diffeology) via smooth maps,
possibly for all $n \in \N$.
To make this precise, we introduce the following concept:

Let $\DS$ be the category with
objects all open subsets of $\R^n$ for all $n \in \N$
and morphisms the smooth maps between them.
Given a diffeological space $X$,
we define $\DS/X$ to be the category with objects all plots of $X$ and 
morphisms the commutative triangles
\[
\xymatrix@C=5pt{U \ar[dr]_p \ar[rr]^f & & V \ar[dl]^q \\ & X, }
\]
with $p,q$ plots of $X$ and $f$ a smooth map.
We call $\DS/X$ the \dfn{category of plots of $X$}.
It is equipped with a forgetful functor $F:\DS/X \ra \Diff$
sending a plot $U \ra X$ to $U$ regarded as a diffeological space
and sending the morphism displayed above to $f$.
We can use $F$ to show that any diffeological space $X$ can be built out of Euclidean spaces:

\begin{proposition}\label{colim}
The colimit of the functor $F:\DS/X \ra \Diff$ is $X$.
\end{proposition}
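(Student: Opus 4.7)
The plan is to verify the universal property by computing the colimit first as a set and then equipping it with its final diffeology. Recall from the preceding discussion that the forgetful functor $\Diff \to \Set$ preserves colimits, so I may form the set-theoretic colimit of $F$, equip it with the final diffeology, and compare with $X$.

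First, I set up the natural cocone. For each object $p : U \to X$ of $\DS/X$, the plot $p$ itself is a map from $F(p) = U$ to $X$, and for every morphism $f : (U,p) \to (V,q)$ in $\DS/X$ we have $q \circ f = p$ by definition. So the family $\{p : F(p) \to X\}$ is a cocone under $F$, inducing a canonical smooth map $\varphi : \colim F \to X$.

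Next, I would show $\varphi$ is a bijection on underlying sets. Surjectivity is immediate: every $x \in X$ is the image of the constant plot $c_x : \R^0 \to X$. For injectivity, suppose $p : U \to X$ and $q : V \to X$ are plots with $p(u) = q(v) = x$. Then the constant maps $\R^0 \to U$ at $u$ and $\R^0 \to V$ at $v$ are smooth and fit into morphisms $(\R^0, c_x) \to (U,p)$ and $(\R^0, c_x) \to (V,q)$ in $\DS/X$. Hence the classes of $u$ and $v$ in $\colim F$ both coincide with the class of the unique point of $\R^0$, so they are equal.

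Finally, I transfer the final diffeology along $\varphi$ and show it equals $\mathcal{D}_X$. By construction, the final diffeology is the smallest diffeology on the underlying set that makes every plot $p : U \to X$ (viewed now as a map into $X$) smooth. On one hand, the original diffeology $\mathcal{D}_X$ makes each plot smooth, by the Smooth Compatibility axiom: if $p \in \mathcal{D}_X$ and $r : V \to U$ is smooth, then $p \circ r \in \mathcal{D}_X$. On the other hand, any diffeology $\mathcal{D}'$ on $X$ making every $p : U \to X$ smooth must contain every plot $p \in \mathcal{D}_X$, since $p = p \circ \id_U$ and $\id_U$ is a plot of $U$. Hence $\mathcal{D}_X$ is the minimal such diffeology, and $\varphi$ is a diffeomorphism. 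The only genuinely non-formal point is the injectivity argument, where the availability of constant plots from $\R^0$ is what prevents the set-level colimit from being larger than $X$.
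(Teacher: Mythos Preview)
Your proof is correct, but it takes a somewhat different route from the paper's. The paper verifies the universal property of the cocone $\{p:U\to X\}$ directly: given any cocone $g:F\to Y$, it defines $h:X\to Y$ by $h(x)=g(c_x)(\ast)$, where $c_x:\R^0\to X$ is the constant plot at $x$, and then checks in one line that $h\circ p=g(p)$ for every plot $p$ (which gives both uniqueness and smoothness at once). You instead construct the colimit abstractly and then produce a diffeomorphism $\varphi:\colim F\to X$, first at the set level and then at the level of diffeologies. Both arguments hinge on the same observation---that the constant plots $\R^0\to X$ connect every point of every plot to a canonical representative---but the paper packages this into the construction of $h$, while you use it for the injectivity of $\varphi$. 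Your approach is slightly longer but makes the structure of the colimit (set plus final diffeology) more explicit; the paper's is terser and avoids ever naming $\colim F$ as an intermediate object.
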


\begin{proof}
Clearly there is a natural cocone $F \ra X$ sending the above commutative triangle to itself.
For each diffeological space $Y$ and cocone $g:F \ra Y$,
we define a set map $h:X \ra Y$ by sending $x \in X$ to $g(x)(\R^0)$,
where by abuse of notation the second $x$ denotes the plot $\R^0 \ra X$ with image $x \in X$.
Note that $h$ induces a (unique) cocone map
since $h(p(u))=g(p(u))=g(p) \circ u$ for each plot $p:U \ra X$ and each $u \in U$,
 which also implies the smoothness of $h$.
\end{proof}

The result is essentially the same as~\cite[Exercise~33]{I2}.
\medskip

Given a diffeological space $X$, the category $\DS/X$ can be used to define
geometric structures on $X$.
See~\cite{I2,So3,La} for a discussion of differential forms and the de Rham cohomology of
a diffeological space,
and see~\cite{He,La} for tangent spaces and tangent bundles.

\section{The $D$-topology}\label{se:D-topology}

We can associate to every diffeological space the following interesting topology:

\begin{definition}[{\cite{I1}, \cite[Chapter~2]{I2}}]
Given a diffeological space $X$, the final topology induced by its plots,
where each domain is equipped with the standard topology,
is called the \dfn{$D$-topology} on $X$.
\end{definition}

In more detail, if $(X, \mathcal{D})$ is a diffeological space,
then a subset $A$ of $X$ is open in the $D$-topology of $X$
if and only if $p^{-1}(A)$ is open for each $p \in \mathcal{D}$.
We call such subsets \dfn{$D$-open}.
If $\mathcal{D}$ is generated by a subset $\mathcal{D}'$,
then $A$ is $D$-open if and only if $p^{-1}(A)$ is open for each $p \in \mathcal{D}'$.

A smooth map $X \ra X'$ is continuous when $X$ and $X'$ are equipped with the
$D$-topology, and so this defines a functor $D: \Diff \ra \Top$ to the
category of topological spaces.

\begin{example}
(1) The $D$-topology on a smooth manifold with the standard diffeology
coincides with the usual topology on the manifold.

(2) The $D$-topology on a discrete diffeological space is discrete,
and the $D$-topology on an indiscrete diffeological space is indiscrete.
\end{example}

Every topological space $Y$ has a natural diffeology,
called the \dfn{continuous diffeology}, whose plots $U \ra Y$
are the continuous maps.
This was defined in Section~2.8 of~\cite{Do}.
A continuous map $Y \ra Y'$ between topological spaces is smooth when $Y$ and $Y'$ are equipped
with the continuous diffeology, and so this defines a functor
$C: \Top \ra \Diff$.

\begin{proposition}\label{DC}
The functors $D: \Diff \rightleftharpoons \Top :C$ are adjoint,
and we have $C \circ D \circ C = C$ and $D \circ C \circ D = D$.
\end{proposition}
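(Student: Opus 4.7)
The plan is to verify all three statements by direct unpacking of the definitions; no deep ideas are required.

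For the adjunction, I would show that for any diffeological space $X$ and any topological space $Y$, the sets of morphisms $DX \to Y$ in $\Top$ and $X \to CY$ in $\Diff$ literally coincide as subsets of the set of all maps $X \to Y$. Continuity of $f : DX \to Y$ unpacks to: for every open $V \subseteq Y$ and every plot $p : U \to X$ of $X$, the preimage $(f \circ p)^{-1}(V) = p^{-1}(f^{-1}(V))$ is open in $U$; equivalently, $f \circ p : U \to Y$ is continuous for every plot $p$. Smoothness of $f : X \to CY$ unpacks to: for every plot $p : U \to X$, the composite $f \circ p$ is a plot of $CY$, i.e., continuous as a map $U \to Y$. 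These conditions are identical, so the identity map on underlying sets gives a bijection between the two hom-sets, and naturality in both variables is immediate from how composition works on each side.

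For $C \circ D \circ C = C$, I would fix a topological space $Y$ and check equality of the two diffeologies on the set $Y$. A plot of $CDCY$ is a continuous map $U \to DCY$, while a plot of $CY$ is a continuous map $U \to Y$. Any continuous map $U \to Y$ is by definition a plot of $CY$, so by the defining property of the $D$-topology it is continuous into $DCY$. Conversely, the topology on $DCY$ is at least as fine as the original topology on $Y$ (every $\tau_Y$-open set $V$ satisfies $p^{-1}(V)$ open for every continuous $p$, hence is $D$-open in $CY$), so continuity into $DCY$ implies continuity into $Y$. The two plot sets agree, giving strict equality of functors.

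For $D \circ C \circ D = D$, I would fix a diffeological space $X$ and check equality of the two topologies on the set $X$. A subset $A$ is open in $D(CDX)$ iff $p^{-1}(A)$ is open for every plot of $CDX$, i.e., for every continuous map $p : U \to DX$. A subset $A$ is open in $DX$ iff $q^{-1}(A)$ is open for every plot $q$ of $X$. Every plot $q$ of $X$ is by the defining property of the $D$-topology continuous as a map $U \to DX$, so the first condition implies the second. Conversely, if $A$ is open in $DX$ then trivially $p^{-1}(A)$ is open for every continuous $p : U \to DX$, giving the first condition. Hence the two topologies coincide. The whole argument is routine; the only care needed is to keep track of which plots and which topology one is working with at each stage, and to note that both equalities are strict identities of functors rather than merely natural isomorphisms.
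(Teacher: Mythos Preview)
Your proposal is correct and is essentially the direct verification that the paper has in mind: the paper's own proof merely cites \cite{SYH} for the adjunction and declares ``the rest is easy,'' so you have simply written out the routine details that the paper omits.
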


\begin{proof}
The adjointness is~\cite[Proposition~3.1]{SYH}, and the rest is easy.
\end{proof}

\begin{proposition}[\cite{He,La}]\label{La}
For each diffeological space, the $D$-topology is locally path-connected.
\end{proposition}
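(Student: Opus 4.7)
The plan is to prove local path-connectedness in its standard form: for every $D$-open set $V \subseteq X$ and every point $x \in V$, the path component $W$ of $x$ in $V$ (with the subspace topology from the $D$-topology) is itself $D$-open. Since path components are path-connected, this gives a neighborhood basis of path-connected open sets at every point.

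To verify that $W$ is $D$-open, I will check the defining condition: for every plot $p:U\to X$, the preimage $p^{-1}(W)$ is open in $U$. So fix a plot $p:U\to X$ and a point $u\in p^{-1}(W)$. Because $W\subseteq V$ and $V$ is $D$-open, $p^{-1}(V)$ is open in $U$, so there is an open Euclidean ball $B\subseteq p^{-1}(V)$ centered at $u$. I will show $B\subseteq p^{-1}(W)$, which proves openness.

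For any $u'\in B$, the straight-line segment $\gamma(t)=(1-t)u+tu'$ is a smooth (hence continuous) path in $B$. Since each plot is continuous from the standard topology on its domain to the $D$-topology on $X$ (this is the definition of the $D$-topology as a final topology), the composite $p\circ\gamma:[0,1]\to X$ is a continuous path in the $D$-topology. Its image lies in $p(B)\subseteq V$, so it is a path in $V$ from $p(u)$ to $p(u')$. Because $p(u)\in W$ and $W$ is the path component of $x$ in $V$, we conclude $p(u')\in W$, i.e.\ $u'\in p^{-1}(W)$.

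Nothing here is subtle; the argument is essentially the standard proof that the final topology along paths is locally path-connected, adapted to the fact that the $D$-topology is the final topology along \emph{all} plots rather than just curves. The only minor point to be careful about is making sure one uses continuity of $p$ into the $D$-topology (which is immediate from the definition) rather than smoothness into $X$, since local path-connectedness is a purely topological property and requires only a continuous path in $V$.
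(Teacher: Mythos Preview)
Your argument is correct and is essentially the standard one: the path component of a point in a $D$-open set is $D$-open because plots are continuous into the $D$-topology and their domains are locally convex, so the image of a small ball lands in a single path component. There is nothing to compare against, however: the paper does not supply its own proof of this proposition but merely cites it from \cite{He,La}.
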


However, not every locally path-connected space comes from a diffeological space;
see Example~\ref{lpc}.

\subsection{The $D$-topology is determined by smooth curves}\label{sse:curves}

\begin{definition}
We say that a sequence $x_m$ in $\R^n$ \dfn{converges fast} to $x$ in $\R^n$
if for each $k \in \N$ the sequence $m^k(x_m-x)$ is bounded.
\end{definition}

Note that every convergent sequence has a subsequence which converges fast.

\begin{lemma}[Special Curve Lemma {\cite[page~18]{KM}}]\label{lem:scl}
Let $x_m$ be a sequence which converges fast to $x$ in $\R^n$.
Then there is a smooth curve $c:\R \ra \R^n$
such that $c(t)=x$ for $t \leq 0$, $c(t)=x_1$ for $t \geq 1$,
$c(\frac{1}{m})=x_m$ for each $m \in \Z^+$,
and $c$ maps $[\frac{1}{m+1},\frac{1}{m}]$ to the line segment joining $x_{m+1}$ and $x_m$.
\end{lemma}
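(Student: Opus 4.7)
The plan is to build $c$ explicitly by smoothly interpolating on each dyadic-style piece $[1/(m+1),1/m]$, then prove smoothness separately on the open set where the formula is obviously smooth and at the single delicate point $t=0$. First, I would fix a smooth ``transition function'' $\varphi:\R\to[0,1]$ with $\varphi(s)=0$ for $s\leq 0$, $\varphi(s)=1$ for $s\geq 1$, and $\varphi^{(k)}(0)=\varphi^{(k)}(1)=0$ for every $k\geq 1$ (the standard construction from $e^{-1/s}$ works). Then define $c(t)=x$ for $t\leq 0$, $c(t)=x_1$ for $t\geq 1$, and on each $[1/(m+1),1/m]$ set
\[
c(t) = x_{m+1} + \varphi\bigl(s_m(t)\bigr)(x_m - x_{m+1}),\qquad s_m(t):= m(m+1)\bigl(t-\tfrac{1}{m+1}\bigr),
\]
where $s_m$ is the affine bijection of $[1/(m+1),1/m]$ onto $[0,1]$. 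This formula forces $c(1/m)=x_m$ and makes $c\bigl([1/(m+1),1/m]\bigr)$ land on the segment from $x_{m+1}$ to $x_m$, so the qualitative requirements are built in by construction.

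Next I would verify smoothness on $\R\setminus\{0\}$. On each open piece $(1/(m+1),1/m)$ the curve is smooth because $\varphi$ is, and at each junction $t=1/m$ the one-sided derivatives of all orders agree because $\varphi^{(k)}$ vanishes at $0$ and $1$; the same vanishing handles the junction at $t=1$ with the constant piece $x_1$. Continuity at $t=0$ is immediate from $\|c(t)-x\|\leq \|x_{m+1}-x\|+\|x_m-x_{m+1}\|$ on $[1/(m+1),1/m]$, both terms of which tend to zero by fast convergence.

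The main obstacle—and the only place where the fast-convergence hypothesis is really used—is verifying that all higher derivatives exist at $0$ and equal $0$. For $t$ in $[1/(m+1),1/m]$ the chain rule gives
\[
\|c^{(k)}(t)\| \leq \bigl(m(m+1)\bigr)^k\,\|\varphi^{(k)}\|_\infty\,\|x_m - x_{m+1}\|.
\]
Fast convergence means $\{m^N(x_m-x)\}$ is bounded for every $N$, so by the triangle inequality $\|x_m-x_{m+1}\|\leq C_N/m^N$ for every $N$, with $C_N$ independent of $m$. Taking $N=3k$ bounds $\|c^{(k)}(t)\|$ by $C'_k/m^k$, which tends to $0$ as $t\to 0^+$. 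I would then run an induction on $k$: assuming $c^{(k-1)}$ exists on all of $\R$ with $c^{(k-1)}(0)=0$ (the base case $k=1$ being part of the continuity argument), the mean value theorem plus the derivative bound above shows $c^{(k-1)}(t)/t\to 0$ as $t\to 0^+$, so the right derivative of $c^{(k-1)}$ at $0$ exists and equals $0$; the left derivative is $0$ because $c\equiv x$ on $(-\infty,0]$. Hence $c^{(k)}(0)=0$, and the estimate above shows $c^{(k)}$ is continuous at $0$, closing the induction. The hard part is thus bookkeeping: matching the polynomial blow-up $(m(m+1))^k$ against the faster-than-polynomial decay guaranteed by fast convergence, which is precisely the quantitative content the hypothesis was designed to deliver.
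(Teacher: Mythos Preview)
Your proof is correct and is essentially the standard argument; the paper does not give its own proof of this lemma but simply cites \cite[page~18]{KM}, where the construction is exactly the one you describe (piecewise affine-reparametrized transition function, with the fast-convergence hypothesis used to kill the polynomial blow-up in the chain-rule factors). One small slip: in the base case $k=1$ of your induction you write that the mean value theorem gives $c^{(k-1)}(t)/t\to 0$, but $c^{(0)}(0)=c(0)=x$, not $0$; the correct statement is $(c(t)-x)/t\to 0$, and for $k\geq 2$ it is $(c^{(k-1)}(t)-0)/t\to 0$. The MVT argument of course handles both uniformly via $c^{(k-1)}(t)-c^{(k-1)}(0)=c^{(k)}(\xi)\,t$, so this is only a wording issue and not a gap.
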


\begin{theorem}\label{th:curves}
The $D$-topology on a diffeological space $X$ is determined by $C^\infty(\R,X)$,
in the sense that a subset $A$ of $X$ is $D$-open
if and only if $p^{-1}(A)$ is open for every $p \in C^\infty(\R,X)$.
\end{theorem}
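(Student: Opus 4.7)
The plan is to prove the nontrivial direction by contradiction, using the Special Curve Lemma to convert a ``bad'' sequence in the domain of an arbitrary plot into a single smooth curve into $X$ that witnesses the failure of openness.

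The easy direction is immediate: every smooth curve $\R \to X$ is itself a plot, so if $A$ is $D$-open then $p^{-1}(A)$ is open for every $p \in C^\infty(\R,X)$.

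For the converse, I would suppose $p^{-1}(A)$ is open for every smooth curve $p$, yet $A$ is not $D$-open. Then there is a plot $q\colon U \to X$ with $U$ open in some $\R^n$, and a point $u \in q^{-1}(A)$ which is not interior to $q^{-1}(A)$. Choose a sequence $u_m \in U \setminus q^{-1}(A)$ with $u_m \to u$. Since every convergent sequence has a fast-converging subsequence, I can pass to a subsequence converging fast to $u$. Next, choose $r > 0$ with the closed ball of radius $r$ around $u$ contained in $U$, and then discard initial terms so that all $u_m$ lie in $B(u,r/2)$; this preserves fast convergence (a tail property) and guarantees that every line segment $[u_{m+1},u_m]$ stays inside $U$. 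Applying the Special Curve Lemma (Lemma~\ref{lem:scl}) to the shifted sequence produces a smooth curve $c\colon \R \to \R^n$ whose image is contained in $B(u,r) \subseteq U$, with $c(0)=u$ and $c(1/m)=u_m$ for all $m \in \Z^+$.

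Now view $c$ as a smooth map $\R \to U$; by the Smooth Compatibility axiom, $q \circ c \in C^\infty(\R,X)$. By hypothesis, $(q \circ c)^{-1}(A) = c^{-1}(q^{-1}(A))$ is open in $\R$. But $0$ belongs to this set while each $1/m$ does not, contradicting openness at $0$. Hence $A$ must be $D$-open.

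The main obstacle is the small technical step of ensuring that the Special Curve Lemma produces a curve whose image actually lies in $U$, not just in $\R^n$; this is why one has to shrink to a ball around $u$ and check that fast convergence survives the reindexing. Once that is arranged, smooth compatibility and the Special Curve Lemma carry the argument through.
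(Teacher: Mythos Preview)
Your proof is correct and follows essentially the same approach as the paper's: both use the Special Curve Lemma to turn a (fast-converging) sequence in the domain of an arbitrary plot into a smooth curve, and then invoke smooth compatibility. The only difference is cosmetic---you argue by contradiction while the paper phrases it directly---and you are in fact more careful than the paper about the point that the curve produced by the Special Curve Lemma must land in $U$, which the paper asserts without comment.
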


\begin{proof}
($\Rightarrow$) This follows from the definition of the $D$-topology.

($\Leftarrow$) Suppose that $p^{-1}(A)$ is open for every $p \in C^\infty(\R,X)$.
Consider a plot $q : U \ra X$, and let $x \in q^{-1}(A)$.
Suppose that $\{x_m\}$ converges fast to $x$.
By the Special Curve Lemma, there is a smooth curve $c: \R \ra U$ such that
$c(\frac{1}{m})=x_m$ for each $m$ and $c(0)=x$.
Since $c^{-1}(q^{-1}(A))$ is open, $x_m$ is in $q^{-1}(A)$ for $m$ sufficiently large.
So $q^{-1}(A)$ is open in $U$.
\end{proof}

\begin{example}\label{diffeology}
Let $X$ be $\R^2$ with the standard diffeology,
and let $Y$ be the set $\R^2$ with the diffeology generated by $C^\infty(\R,\R^2)$.
Then $D(X)$ is homeomorphic to $D(Y)$ since $C^\infty(\R,X) = C^\infty(\R,Y)$,
but $X$ and $Y$ are not diffeomorphic since the identity map $\R^2 \ra \R^2$
does not locally factor through curves.
In other words, the $D$-topology is determined by smooth curves, but the diffeology is not.

In this example, $Y$ has the smallest diffeology such that $C^{\infty}(\R, \R^2)$
consists of the usual smooth curves.
In contrast, by Boman's theorem~\cite[Corollary~3.14]{KM}, $X$ has the \emph{largest}
diffeology such that $C^{\infty}(\R, \R^2)$ consists of the usual smooth curves.
That is, $p:U \ra X$ is a plot if and only if for every smooth function $c:\R \ra U$,
the composite $p \circ c$ is in $C^\infty(\R,X)$.
\end{example}

\subsection{Relationship with $\Delta$-generated topological spaces}\label{sse:delta-generated}

Write $\Delta^n$ for the standard $n$-simplex in $\Top$.

\begin{definition}
A topological space $X$ is called \dfn{$\Delta$-generated} if the following condition holds:
$A \subseteq X$ is open if and only if $f^{-1}(A)$ is open in $\Delta^n$
for each continuous map $f:\Delta^n \ra X$ and each $n \in \N$.
\end{definition}

It is not hard to show that being $\Delta$-generated is the same as being $\R$-generated
or $[0, 1]$-generated,
i.e., that one can determine the open sets of a $\Delta$-generated space using just the
continuous maps $\R \ra X$ or $[0, 1] \ra X$.
This follows from the existence of a surjective continuous map $\R \ra \Delta^n$
that exhibits $\Delta^n$ as a quotient of $\R$.
Note the similarity to Theorem~\ref{th:curves}.
More on $\Delta$-generated topological spaces can be found in~\cite{D,FR}.

\begin{proposition}[\cite{SYH}]\label{delta}
The spaces in the image of the functor $D$ are exactly the $\Delta$-generated topological spaces.
\end{proposition}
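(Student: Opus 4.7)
The plan is to prove both inclusions separately: first that every space of the form $D(X)$ is $\Delta$-generated, and second that every $\Delta$-generated space $Y$ arises as $D(X)$ for some diffeological $X$, where the natural choice is $X = C(Y)$.

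For the forward direction, I would use Theorem~\ref{th:curves} together with the remark that a space is $\Delta$-generated iff it is $\R$-generated. Fix a diffeological space $X$ and suppose $A \subseteq D(X)$ has the property that $f^{-1}(A)$ is open for every continuous map $f: \R \to D(X)$. I want to conclude $A$ is $D$-open. But every smooth curve $p \in C^\infty(\R,X)$ is in particular continuous into $D(X)$ (smooth maps are $D$-continuous), so $p^{-1}(A)$ is open for all such $p$, and Theorem~\ref{th:curves} then gives that $A$ is $D$-open. The reverse implication (that $D$-open sets pull back to opens along continuous maps into $D(X)$) is just the definition of continuity. Hence $D(X)$ is $\R$-generated, which the paper has already noted is equivalent to being $\Delta$-generated.

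For the reverse direction, let $Y$ be $\Delta$-generated and consider the continuous diffeology $C(Y)$. I claim $DC(Y) = Y$ as topological spaces. The underlying sets agree, and the counit map $DC(Y) \to Y$ of the adjunction in Proposition~\ref{DC} is the identity on points, so it suffices to compare open sets. If $A$ is open in $Y$, then for every plot $p: U \to C(Y)$ (i.e., every continuous map $U \to Y$) the preimage $p^{-1}(A)$ is open, so $A$ is $D$-open in $C(Y)$. Conversely, if $A$ is $D$-open in $C(Y)$, then $f^{-1}(A)$ is open for every continuous $f: \Delta^n \to Y$ (such an $f$ being a plot of $C(Y)$), and the $\Delta$-generated hypothesis on $Y$ forces $A$ to be open in $Y$. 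Thus $Y = D(C(Y))$ lies in the image of $D$.

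There is no real obstacle here: both directions reduce immediately to Theorem~\ref{th:curves} (which does the geometric work of extracting curves from arbitrary plots) and to the definitions of the $D$-topology and the continuous diffeology. The only subtle point is the first direction, where one must remember that being $\Delta$-generated is equivalent to being $\R$-generated in order to apply Theorem~\ref{th:curves} directly; without that equivalence one would instead need to invoke the Special Curve Lemma once more to pass from smooth curves to continuous maps out of simplices, which is unnecessary given the remark recorded just before the proposition.
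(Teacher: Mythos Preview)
Your proof is correct and follows essentially the same approach as the paper's. The only minor variation is in the reverse direction: the paper invokes Theorem~\ref{th:curves} again to reduce $D$-openness in $C(Y)$ to a condition on continuous curves $\R \to Y$, whereas you argue directly from the definition of the $D$-topology using continuous maps $\Delta^n \to Y$ as plots of $C(Y)$; both routes are equally short.
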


Since the argument is easy, we include a proof.

\begin{proof}
Let $X$ be a diffeological space, and consider $A \subseteq D(X)$.
Suppose $f^{-1}(A)$ is open in $\R$ for all continuous $f: \R \ra D(X)$.
Then $f^{-1}(A)$ is open in $\R$ for all smooth $f: \R \ra X$.
Thus $A$ is open in $D(X)$, and so $D(X)$ is $\Delta$-generated.

Now suppose that $Y$ is $\Delta$-generated.
By adjointness, the identity map $D(C(Y)) \ra Y$ is continuous.
We claim that it is a homeomorphism, and so $Y$ is in the image of $D$.
Indeed, suppose $A \subseteq D(C(Y))$ is open.
That is, $f^{-1}(A)$ is open in $\R$ for all smooth $f: \R \ra C(Y)$.
That is, $f^{-1}(A)$ is open in $\R$ for all continuous $f: \R \ra Y$.
Then, since $Y$ is $\Delta$-generated, $A$ is open in $Y$.
\end{proof}

Because of this, it will be helpful to better understand which topological
spaces are $\Delta$-generated.

\begin{proposition}\label{countable}
Every locally path-connected first countable topological space is $\Delta$-gen\-er\-ated.
\end{proposition}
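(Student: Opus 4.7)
The plan is to prove the nontrivial direction of the $\Delta$-generated condition by contradiction, using a sequence of paths shrinking to a bad point, in the spirit of the proof of Theorem~\ref{th:curves}. Since the excerpt notes that $\Delta$-generated is equivalent to $[0,1]$-generated, it suffices to show: whenever $A\subseteq X$ satisfies $f^{-1}(A)$ open in $[0,1]$ for every continuous $f:[0,1]\ra X$, then $A$ is open. The forward implication is automatic from continuity.

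First I would exploit the two hypotheses to produce, for any given $x\in A$, a nested neighborhood basis $V_1\supseteq V_2\supseteq\cdots$ of $x$ consisting of path-connected open sets. First countability supplies a countable descending basis $U_1\supseteq U_2\supseteq\cdots$ at $x$, and then I define the $V_n$ inductively: pick $V_n$ path-connected open with $x\in V_n\subseteq V_{n-1}\cap U_n$, which is possible because $V_{n-1}\cap U_n$ is open and $X$ is locally path-connected. Because $V_n\subseteq U_n$, the $V_n$ again form a basis at $x$.

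Now suppose, for contradiction, that no $V_n$ is contained in $A$, and choose $x_n\in V_n\setminus A$ for each $n$. Since $V_n$ is path-connected and contains both $x_n$ and $x_{n+1}$ (as $V_{n+1}\subseteq V_n$), there is a continuous path $\gamma_n:[1/(n+1),1/n]\ra V_n$ with $\gamma_n(1/(n+1))=x_{n+1}$ and $\gamma_n(1/n)=x_n$. Define $f:[0,1]\ra X$ by $f(0)=x$, $f(1)=x_1$, and $f=\gamma_n$ on $[1/(n+1),1/n]$. Continuity at every point of $(0,1]$ is immediate from the pasting lemma since consecutive paths agree at their common endpoint $x_n$. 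For continuity at $0$, given any open neighborhood $U$ of $x$, choose $N$ with $V_N\subseteq U$; then for every $t\in(0,1/N]$ we have $t\in[1/(n+1),1/n]$ for some $n\geq N$, so $f(t)\in V_n\subseteq V_N\subseteq U$, and $f(0)=x\in U$.

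By hypothesis, $f^{-1}(A)$ is open in $[0,1]$. Since $f(0)=x\in A$, some interval $[0,\delta)$ lies in $f^{-1}(A)$, forcing $x_n=f(1/n)\in A$ for all sufficiently large $n$, contradicting the choice of $x_n$. Hence some $V_n$ lies in $A$, proving $A$ is open at $x$, and since $x\in A$ was arbitrary, $A$ is open. The only real subtlety I expect is arranging the nested basis of path-connected neighborhoods and making the pasted map $f$ continuous at $0$; the rest is formal.
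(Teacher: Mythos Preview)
Your proof is correct and follows essentially the same route as the paper's: build a nested path-connected neighborhood basis at a point $x\in A$, choose bad points $x_n\in V_n\setminus A$, concatenate paths on the intervals $[1/(n+1),1/n]$ to produce a continuous $f:[0,1]\ra X$ with $f(0)=x$ and $f(1/n)=x_n$, and derive a contradiction from the openness of $f^{-1}(A)$ at $0$. The paper phrases it contrapositively (showing $A\notin\tau$ implies $A$ is not in the $\Delta$-generated refinement) and leaves the verification of continuity at $0$ to the reader, but the argument is the same.
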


\begin{proof}
Let $(X,\tau)$ be a locally path-connected first countable topological space.
Then for each $x \in X$, there exists a neighborhood basis $\{A_i\}_{i=1}^\infty$ of $x$,
such that
\begin{enumerate}
\item each $A_i$ is path-connected; and
\item $A_{i+1} \subseteq A_i$.
\end{enumerate}
This is because, for a neighborhood basis $\{B_i\}_{i=1}^\infty$ of $x$,
we can define $A_1$ to be the path-component of $B_1$ containing $x$,
and $A_i$ to be the path-component of $A_{i-1} \cap B_i$ containing $x$ for $i \geq 2$.
Since $X$ is locally path-connected, each $A_i$ is open.

Now let $\tau'$ be the final topology on $X$
for all continuous maps $\Delta^n \ra (X,\tau)$ for all $n \in \N$.
Clearly $\tau \subseteq \tau'$.
Suppose $A$ is not in $\tau$.
This means that there exists $x \in A$ such that
for each $U \in \tau$ which is a neighborhood of $x$,
there exists $x_U \in U \setminus A$.
Let $\{A_i\}_{i=1}^\infty$ be a neighbourhood basis for $x$ with the above two properties,
and write $x_n \in A_n \setminus A$ accordingly.
Define $f:[0,1] \ra X$ by letting $f|_{[\frac{1}{i+1},\frac{1}{i}]}$ be a continuous path
connecting $x_{i+1}$ to $x_i$ in $A_i$, and $f(0)=x$.
It is easy to see that $f$ is continuous for $(X,\tau)$,
but $f^{-1}(A)$ is not open in $[0,1]$.
So $A$ is not in $\tau'$.
\end{proof}

It follows from Propositions~\ref{La} and~\ref{delta} that
every $\Delta$-generated space is locally path-connected.
However, not every $\Delta$-generated space is first countable:

\begin{proposition}\label{pr:fc}
Let $X$ be a set with the complement-finite topology.
We write $\card(X)$ for its cardinality.
Then
\begin{enumerate}
\item $X$ is $\Delta$-generated if $\card(X)<\card(\N)$ or $\card(X) \geq \card(\R)$;
\item $X$ is not $\Delta$-generated if $\card(X)=\card(\N)$.
\end{enumerate}
\end{proposition}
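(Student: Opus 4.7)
The plan is to handle the three subcases separately, using the characterization (noted immediately after the definition of $\Delta$-generated) that $X$ is $\Delta$-generated if and only if a subset $A \subseteq X$ is open whenever $f^{-1}(A)$ is open in $\R$ for every continuous $f : \R \to X$. The case $\card(X) < \card(\N)$ is immediate: then $X$ is finite, every subset automatically has finite complement and is therefore open, so the complement-finite topology is discrete and $X$ is trivially $\Delta$-generated.

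For $\card(X) \geq \card(\R)$ I would argue by contrapositive. The key observation is that any injection $f : \R \to X$ is automatically continuous for the complement-finite topology, because each fiber $f^{-1}(x)$ contains at most one point and singletons are closed in $\R$, so preimages of finite, and hence of closed, subsets of $X$ are closed. Given a non-open subset $A \subseteq X$, meaning $A$ is non-empty and $X \setminus A$ is infinite, I pick $a \in A$ and distinct $b_1, b_2, \ldots \in X \setminus A$, set $f(0) = a$ and $f(1/n) = b_n$ for $n \in \Z^+$, and extend $f$ to an injection on all of $\R$ with values in $X \setminus \{a, b_1, b_2, \ldots\}$. This extension is possible because removing a countable set from $X$ still leaves cardinality at least $\card(\R)$. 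Then $0 \in f^{-1}(A)$ while $1/n \notin f^{-1}(A)$ for every $n$, so $f^{-1}(A)$ is not open in $\R$; by contrapositive, $X$ is $\Delta$-generated.

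For part~(2), where $\card(X) = \card(\N)$, it suffices to show that every continuous $f : \R \to X$ is constant, since then for the non-open singleton $A = \{x_0\}$ one has $f^{-1}(A) \in \{\emptyset, \R\}$ for every such $f$, and $A$ certifies that $X$ is not $\Delta$-generated. To show that $f$ must be constant, note that each fiber $f^{-1}(x)$ is closed in $\R$ and that the fibers partition $\R$ into at most countably many closed subsets. By Sierpinski's classical theorem a continuum cannot be written as a disjoint union of countably many non-empty closed pieces with more than one piece; applied to any closed bounded interval $[a,b] \subseteq \R$, which is a continuum, this forces $[a,b]$ to lie inside a single fiber, and since any two reals lie in a common bounded interval all of $\R$ lies in one fiber. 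The main obstacle is precisely this Sierpinski step ruling out non-constant continuous maps into a countable complement-finite space; everything else reduces to elementary cardinality and connectedness arguments.
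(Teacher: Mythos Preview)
Your proof is correct and follows essentially the same route as the paper. The only cosmetic differences are that the paper phrases the $\card(X)\geq\card(\R)$ case in terms of non-closed sets $B$ (mapping $\{1/n\}$ into $B$ and $0$ outside) rather than non-open sets, and applies Sierpi\'nski directly to $[0,1]$ instead of to arbitrary $[a,b]$; these are dual or trivially equivalent formulations of the same argument.
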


Note that $X$ is not first countable when $\card(X) \geq \card(\R)$.
This provides a counterexample to the converse of Proposition~\ref{countable}.

\begin{proof}
(1) If $X$ is a finite set,
then the complement finite topology is the discrete topology.
Hence $X$ is $\Delta$-generated.

Assume $\card(X) \geq \card(\R)$,
and let $B$ be a non-closed subset of $X$,
that is, $B \neq X$ and $\card(B) \geq \card(\N)$.
We must construct a continuous map $f:\R \ra X$ such that $f^{-1}(B)$ is not closed in $\R$.
Note that in this case, every injection $\R \ra X$ is continuous.

Take an injection $\tilde{f}:\{\frac{1}{n}\}_{n \in \Z^+} \ra B$.
We can extend this to an injection $f:\R \ra X$ with $f(0) \in X \setminus B$.
This map is what we are looking for.

(2) If $\card(X)=\card(\N)$, then every continuous map $[0, 1] \ra X$ is constant.
Otherwise, since every point in $X$ is closed,
$[0, 1]$ would be a disjoint union of at least $2$ and at most countably many non-empty closed subsets,
which contradicts a theorem of Sierpi\'{n}ski (see, e.g.,~\cite[A.10.6]{vM}
or the slick argument posted by Gowers~\cite{G}).
Since $X$ is not discrete, it is not $\Delta$-generated.
\end{proof}

\begin{remark}
Assume the continuum hypothesis.
Then the above proposition says that a set $X$ with the complement finite topology
is $\Delta$-generated if and only if $X$ is not an infinite countable set.
\end{remark}

Here is an example showing that
not every locally path-connected topological space is the $D$-topology of a diffeological space:

\begin{example}\label{lpc}
As a set, let $X$ be the disjoint union of copies of the closed unit interval
indexed by the set $J$ of countable ordinals.
We write elements in $X$ as $x_a$ with $x \in [0,1]$ and $a \in J$.
Let $Y$ be the quotient set $X/{\sim}$,
where the only non-trivial relations are $1_a \sim 1_b$ for all $a, b \in J$.
Since we will only work with $Y$,
we denote the elements of $Y$ in the same way as those of $X$.
The topology on $Y$ is generated by the following basis:
\begin{enumerate}
\item the open interval $(x_a,y_a)$ for each $0 \leq x < y \leq 1$ and $a \in J$;
\item the set $U_{a,x} := (\cup_{a \leq b \in J}[0_b,1_b]) \cup (\cup_{c<a} (x_c,1_c])$
for each $a \in J$ and $x \in [0,1)$.
\end{enumerate}
One can show that $Y$ is locally path-connected (but not first countable).
However, $Y$ is not $\Delta$-generated.
Indeed, let $A=\cup_{a \in J}(0_a,1_a]$.
Then $A$ is not open in $Y$.
For every continuous map $f:\Delta^n \ra X$, we claim that $f^{-1}(A)$ is open in $\Delta^n$.
Otherwise, there exists $u \in f^{-1}(A)$ such that no open neighborhood of $u$ is contained in $f^{-1}(A)$.
Since the intervals $(x_a, y_a)$ are open, we must have $f(u)=1_a$, the common point.
Choose a sequence $(u_i)$ converging to $u$
such that each $u_i$ is not in $f^{-1}(A)$.
Then $f(u_i) = 0_{b_i}$ for some countable ordinals $b_i$.
Let $b$ be a countable ordinal larger than each $b_i$.
Then $U_{b,0}$ is an open set containing $f(u)$ but none of the $f(u_i)$, so
$f(u_i)$ is not convergent to $f(u)=1_a$,
which contradicts the continuity of $f$.
\end{example}

\subsection{Two topologies related to a subset of a diffeological space}\label{sse:subset}

Let $X$ be a diffeological space,
and let $Y$ be a quotient set of $X$.
Then we can give $Y$ two topologies:
\begin{enumerate}
\item the $D$-topology of the quotient diffeology on $Y$;
\item the quotient topology of the $D$-topology on $X$.
\end{enumerate}
Since $D:\Diff \ra \Top$ is a left adjoint,
these two topologies are the same.

\medskip

Similarly, let $X$ be a diffeological space,
and let $A$ be a subset of $X$.
Then we can give $A$ two topologies:
\begin{enumerate}
\item $\tau_1(A):$ the $D$-topology of the subset diffeology on $A$;
\item $\tau_2(A):$ the sub-topology of the $D$-topology on $X$.
\end{enumerate}
However, these two topologies are not always the same.
In general, we can only conclude that $\tau_2(A) \subseteq \tau_1(A)$.

\begin{example}\label{2Q}
(1) Let $A$ be a subset of $\R$.
Then $\tau_1(A)$ is discrete if and only if $A$ is totally disconnected under the sub-topology of $\R$.
In particular, if $A=\Q$,
then $\tau_1(\Q)$ is the discrete topology,
which is strictly finer than the sub-topology $\tau_2(\Q)$.

(2) Let $f:\R \ra \R$ be a continuous and nowhere differentiable function,
and let $A=\{(x,f(x)) \mid x \in \R\}$ be its graph, equipped with the subset diffeology of $\R^2$.
Then $\tau_1(A)$ is the discrete topology,
which is strictly finer than the sub-topology of $\R^2$.
Here is the proof.
Let $g:\R \ra \R^2$ be a smooth map whose image is in $A$,
and define $y, z: \R \to \R$ by $g(t) = (y(t),z(t))$.
Assume that $y'(a) \neq 0$ for some $a \in \R$.
Then by the inverse function theorem, $y:\R \ra \R$ is a local diffeomorphism around $a$.
Since $\im(g) \subseteq A$, we have $z=f \circ y$,
which implies that $f=z \circ y^{-1}$ around $y(a)$, contradicting nowhere differentiability of $f$.
Therefore, any plot of the form $\R \ra A$ is constant.
By Theorem~\ref{th:curves}, $\tau_1(A)$ is discrete.
On the other hand, the sub-topology $\tau_2(A)$ is homeomorphic to the usual topology on $\R$.
\end{example}

\begin{definition}[{\cite[2.14]{I2}}]\label{de:embedded}
When $\tau_1(A) = \tau_2(A)$, we say that $A$ is an \dfn{embedded subset} of $X$.
\end{definition}

We are interested in conditions under which this holds.

\begin{lemma}\label{l1}
Let $A$ be a convex subset of $\R^n$.
Then $A$ is an embedded subset of $\R^n$.
\end{lemma}

\begin{proof}
Following the idea of the proof of~\cite[Lemma~24.6(3)]{KM},
let $B \subseteq A$ be closed in the $\tau_1(A)$-topology,
and let $\bar{B}$ be the closure of $B$ in $A$ for the $\tau_2(A)$-topology.
Note that the $\tau_2(A)$-topology is the same as the sub-topology of $\R^n$.
Hence, for any $b \in \bar{B}$, we can find a sequence $b_n$ in $B$ which converges fast to $b$.
Since $A$ is convex, the Special Curve Lemma (Lemma~\ref{lem:scl}) says that
there is a smooth curve $c:\R \ra A$ such that $c(0)=b$ and $c(\frac{1}{n})=b_n$ for each $n \in \Z^+$.
Therefore, $b \in B$ by the definition of the $D$-topology.
\end{proof}

\begin{lemma}\label{l2}
If $A$ is a $D$-open subset of a diffeological space $X$, then 
$A$ is an embedded subset of $X$.
\end{lemma}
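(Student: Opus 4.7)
The plan is to establish the nontrivial inclusion $\tau_1(A) \subseteq \tau_2(A)$, since the reverse inclusion $\tau_2(A) \subseteq \tau_1(A)$ was noted to hold in general. The cleanest approach is to show that any $B \in \tau_1(A)$ is already $D$-open in $X$; this immediately yields $B = A \cap B \in \tau_2(A)$ by the very definition of the sub-topology $\tau_2(A)$.

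To verify that such a $B$ is $D$-open in $X$, I would test against an arbitrary plot $p: U \to X$ and check that $p^{-1}(B)$ is open in $U$. The key leverage comes from the hypothesis that $A$ is $D$-open in $X$: this forces $V := p^{-1}(A)$ to be open in $U$. Since $V$ is an open subset of some $\R^n$, the restriction $p|_V: V \to A$ is a smooth map to $A$ equipped with the sub-diffeology, and hence is a plot for the sub-diffeology on $A$.

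Now $B$ is open in $\tau_1(A)$, which by definition means $(p|_V)^{-1}(B)$ is open in $V$. Because $B \subseteq A$, one has the identity $(p|_V)^{-1}(B) = p^{-1}(B) \cap V = p^{-1}(B)$, the last equality using $B \subseteq A$ so that $p^{-1}(B) \subseteq p^{-1}(A) = V$. Since $V$ is open in $U$, it follows that $p^{-1}(B)$ is open in $U$, as required.

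There is no real obstacle here; the argument is essentially the observation that when $A$ is $D$-open, every plot of $X$ whose image meets $A$ restricts, on the preimage of $A$, to a plot of the sub-diffeology, so testing $D$-openness of subsets of $A$ against plots of $X$ is no stronger than testing against plots of $A$. The hypothesis that $A$ is $D$-open is used precisely once, to guarantee that $p^{-1}(A)$ is open so that the restricted domain $V$ inherits the structure of an open subset of $\R^n$.
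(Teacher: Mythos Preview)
Your proof is correct and follows essentially the same approach as the paper's: show that any $B \in \tau_1(A)$ is $D$-open in $X$ by restricting an arbitrary plot $p:U\to X$ to the open set $p^{-1}(A)$, observing this restriction is a plot of $A$, and concluding that $p^{-1}(B)$ is open in $U$.
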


\begin{proof}
Let $B$ be in $\tau_1(A)$.
To show that $B$ is in $\tau_2(A)$, it suffices to show that $B$ is $D$-open in $X$.
Let $p:U \ra X$ be an arbitrary plot of $X$.
Since $A$ is $D$-open in $X$, $p^{-1}(A)$ is an open subset of $U$.
Hence the composition of $p^{-1}(A) \hookrightarrow U \ra X$ is also a plot for $X$,
which factors through the inclusion map $A \hookrightarrow X$.
Since $B \in \tau_1(A)$, $(p |_{p^{-1}(A)})^{-1}(B)$ is open in $p^{-1}(A)$,
which implies that $p^{-1}(B)$ is open in $U$.
Thus $B$ is $D$-open in $X$, as required.
\end{proof}

\begin{example}
$GL(n,\R)$ is $D$-open in $M(n,\R) \cong \R^{n^2}$, so it is an embedded subset.
\end{example}

Also see Corollary~\ref{D-open} for another example.
Note that
Lemma~\ref{l2} is not true if we change $D$-open to $D$-closed:

\begin{example}
Let $A=\{\frac{1}{n}\}_{n \in \Z^+} \cup \{0\} \subset \R$.
Then $A$ is $D$-closed in $\R$.
It is easy to check that $\tau_1(A)$ is discrete,
and is strictly finer than $\tau_2(A)$.
\end{example}

\begin{proposition}\label{pr:topologies-agree}
Let $X$ be a diffeological space and let $A$ be a subset of $X$.
If there exists a $D$-open neighborhood $C$ of $A$ in $X$
together with a smooth retraction $r:C \ra A$,
then $A$ is embedded in $X$.
(Here both $C$ and $A$ are equipped with the subset diffeologies from $X$.)
\end{proposition}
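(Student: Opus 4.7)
The plan is to prove the non-trivial inclusion $\tau_1(A)\subseteq\tau_2(A)$, since $\tau_2(A)\subseteq\tau_1(A)$ always holds. So take an arbitrary $B\in\tau_1(A)$; the goal is to exhibit a $D$-open subset $V$ of $X$ with $B=V\cap A$.

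The natural candidate is $V:=r^{-1}(B)$. First I would observe that because $C$ is $D$-open in $X$, Lemma~\ref{l2} applies to $C$, giving $\tau_1(C)=\tau_2(C)$; that is, the $D$-topology of the sub-diffeology on $C$ coincides with the subspace topology induced from $D(X)$. Next, since $r:C\to A$ is smooth and the $D$-functor sends smooth maps to continuous ones, $r$ is continuous as a map $(C,\tau_1(C))\to(A,\tau_1(A))$. Therefore $r^{-1}(B)$ is open in $\tau_1(C)=\tau_2(C)$, so there exists a $D$-open subset $W$ of $X$ with $r^{-1}(B)=W\cap C$. Because $C$ itself is $D$-open in $X$, the intersection $W\cap C$ is $D$-open in $X$, which shows that $r^{-1}(B)$ is $D$-open in $X$.

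It remains to check $r^{-1}(B)\cap A=B$. Since $r$ is a retraction onto $A$, for every $a\in A\subseteq C$ we have $r(a)=a$, hence $a\in r^{-1}(B)$ iff $a\in B$; this gives the equality. Combining the two points, $B$ is the intersection of a $D$-open subset of $X$ with $A$, so $B\in\tau_2(A)$, completing the proof.

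There is no real obstacle here: everything reduces to the retraction identity $r|_A=\id_A$ together with the already-established Lemma~\ref{l2} for $D$-open subsets. The only mild subtlety is remembering that continuity of $r$ from $(C,\tau_1(C))$ to $(A,\tau_1(A))$ is what we get directly from smoothness, and that this is what lets us trade a $\tau_1(A)$-open set for a $\tau_1(C)=\tau_2(C)$-open set, which then lifts to a $D$-open set of $X$ precisely because $C$ is $D$-open.
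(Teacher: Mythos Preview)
Your proof is correct and follows essentially the same approach as the paper's: take $B\in\tau_1(A)$, use smoothness of $r$ and Lemma~\ref{l2} to conclude $r^{-1}(B)\in\tau_1(C)=\tau_2(C)$ is $D$-open in $X$, and then observe $B=A\cap r^{-1}(B)\in\tau_2(A)$ via the retraction identity. The paper's proof is simply a more compressed version of exactly these steps.
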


\begin{proof}
Let $B \in \tau_1(A)$.
Then $r^{-1}(B) \in \tau_1(C)=\tau_2(C)$ is $D$-open in $X$.
Therefore, $B=A \cap r^{-1}(B) \in \tau_2(A)$.
\end{proof}

\begin{example}
Given a smooth manifold $M$ of dimension $n>0$,
by the strong Whitney Embedding Theorem,
there is a smooth embedding $M \hookrightarrow \R^{2n}$.
If we view $M$ as a subset of $\R^{2n}$, then it is an embedded subset,
since there is an open tubular neighborhood $U$ of $M$ in $\R^{2n}$
together with a smooth retraction $U \ra M$.
\end{example}

\section{The $D$-topology on function spaces}\label{se:function-spaces}

Let $M$ and $N$ be smooth manifolds.
Recall that the set $C^\infty(M,N)$ of smooth maps from $M$ to $N$
has a functional diffeology described just after Theorem~\ref{th:cartesian-closed}.
In this section, we consider the topological space obtained by taking the
$D$-topology associated to this diffeology,
and we compare it to other well-known topologies on this set:
the compact-open topology, the weak topology, and the strong topology.

Here is a review of these three topologies and their relationship.
The books~\cite{Hi,KM,Mi} are good references for the weak and strong topologies.

The compact-open topology on $C^\infty(M,N)$ has a subbasis which consists of the sets
$A(K,W) = \{f \in C^\infty(M,N) \mid f(K) \subseteq W\}$,
where $K$ is a non-empty compact subset of $M$ and $W$ is an open subset of $N$.
(This makes sense for any diffeological spaces $M$ and $N$, where $K$ is then
required to be compact in $D(M)$ and $W$ to be open in $D(N)$.)

We now describe a subbasis for the weak topology on $C^\infty(M,N)$.
For $r \in \N$, $(U,\phi)$ a chart of $M$, $(V,\psi)$ a chart of $N$,
$K \subseteq U$ compact, $f \in C^\infty(M,N)$ with $f(K) \subseteq V$, and $\epsilon > 0$,
we define the set
$N^r(f,(U,\phi),(V,\psi),K,\epsilon)$ to be
$\{g \in C^\infty(M,N) \mid g(K) \subseteq V \text{ and }
\|D^i(\psi \circ f \circ \phi^{-1})(x)-D^i(\psi \circ g \circ \phi^{-1})(x)\|<\epsilon$
for each $x \in \phi(K)$ and each multi-index $i$ with $|i| \leq r\}$.
These sets form a subbasis for the weak topology.
Here $i=(i_1,\ldots,i_m)$ is a multi-index in $\N^m$ with $m=\dim(M)$, $|i|=i_1+\cdots+i_m$,
and $D^i$ is the differential operator
$\frac{\partial^{|i|}}{\partial x_1^{i_1} \cdots \partial x_m^{i_m}}$.

A subbasis for the strong topology on $C^\infty(M,N)$ is similar, but
it allows constraints using multiple charts.
More precisely, if
$N^r(f,(U_i,\phi_i),(V_i,\psi_i),K_i,\epsilon_i)$ is a family of subbasic
sets for the weak topology such that the collection $\{ U_i \}$ is locally finite,
then the intersection of this family is a subbasic set for the strong topology.
In fact, one can show that these intersections form a base for the strong topology.

Each of these is at least as fine as the previous one,
i.e.,
\[
\text{compact-open topology} \subseteq \text{weak topology} \subseteq \text{strong topology.}
\]
The first inclusion is proved in Lemma~\ref{le:wcco}, and the second is clear.
The compact-open topology and the weak topology coincide if and only if $M$ or $N$
is zero-dimensional (see Example~\ref{compare}).
Moreover, the weak topology and the strong topology coincide if the domain
$M$ is compact and are different if $M$ is non-compact and $N$ has positive dimension
(see~\cite[pp.~35--36]{Hi}).

\medskip

Now we start our comparison of the $D$-topology with these topologies. 
The following lemma is needed for the subsequent proposition.

\begin{lemma}\label{le:product}
Let $X$ and $Y$ be two diffeological spaces such that $D(X)$ is locally compact Hausdorff.
Then the natural bijection $D(X \times Y) \ra D(X) \times D(Y)$ is a homeomorphism.
\end{lemma}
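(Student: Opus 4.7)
The plan is to reduce to the case where both factors are open subsets of Euclidean space --- where the conclusion is immediate --- via a twofold colimit argument. The three preservation properties I will exploit are: $Z \times (-)$ preserves colimits in $\Diff$ for any $Z$ by cartesian closedness (Theorem~\ref{th:cartesian-closed}); $D$ preserves colimits as a left adjoint (Proposition~\ref{DC}); and $Z \times (-)$ preserves colimits in $\Top$ whenever $Z$ is locally compact Hausdorff.

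First I would note that the natural map $D(X \times Y) \to D(X) \times D(Y)$ is automatically a continuous bijection: continuous because the projections out of $X \times Y$ are smooth and $D$ is functorial, bijective on underlying sets. So the real content is showing that its inverse is continuous.

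For the first reduction, write $Y \cong \colim_{p:U_p \to Y} U_p$ in $\Diff$ using Proposition~\ref{colim}. Since $X \times (-)$ preserves colimits in $\Diff$, we get $X \times Y \cong \colim_p (X \times U_p)$, and applying $D$ yields $D(X \times Y) \cong \colim_p D(X \times U_p)$ in $\Top$. On the other hand, $D(Y) \cong \colim_p U_p$, and because $D(X)$ is locally compact Hausdorff, $D(X) \times D(Y) \cong \colim_p (D(X) \times U_p)$. This reduces the theorem to the case where $Y$ is an open subset $U$ of some $\R^n$.

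For the second reduction, apply the same idea to $X \cong \colim_{q:V_q \to X} V_q$, using now that $(-) \times U$ preserves colimits both in $\Diff$ (cartesian closedness) and in $\Top$ (since the open Euclidean set $U$ is locally compact Hausdorff). The remaining case $D(V \times U) \cong D(V) \times U$ for open Euclidean $V,U$ is immediate, since the product diffeology on $V \times U$ agrees with the standard diffeology on $V \times U$ viewed as an open subset of Euclidean space. The main obstacle is bookkeeping --- verifying that under all these colimit identifications the two sides are actually being compared via the canonical map of the statement; this follows from the naturality of each constituent isomorphism together with the universal property of colimits.
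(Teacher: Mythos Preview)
Your proof is correct and follows essentially the same approach as the paper: both reduce to the base case of open Euclidean subsets via Proposition~\ref{colim}, exploiting that $D$, $Z\times(-)$ in $\Diff$, and $W\times(-)$ in $\Top$ (for $W$ locally compact Hausdorff) all preserve colimits as left adjoints. The paper compresses the two reductions into one sentence, merely noting that both $D(X)$ and each $D(U)$ are locally compact Hausdorff; your version makes the two-step structure and the naturality bookkeeping explicit, but the content is the same.
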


Note that when $X$ is a smooth manifold, $D(X)$ is locally compact Hausdorff.

\begin{proof}
First observe that the natural bijection $D(U \times V) \ra D(U) \times D(V)$ is a homeomorphism
for $U$ and $V$ open subsets of Euclidean spaces, since in this case the $D$-topology
is the usual topology.
The functors $D: \Diff \to \Top$,
$Z \times -: \Diff \to \Diff$ for any diffeological space $Z$
and $W \times -: \Top \to \Top$ for any locally compact Hausdorff space $W$
all preserve colimits since they are left adjoints.
Thus the claim follows from Proposition~\ref{colim}, using that
$D(X)$ is locally compact Hausdorff, as is each $D(U)$ for $U$ an open subset of some Euclidean space.
\end{proof}

For general $X$ and $Y$, one can show using a similar argument that
the $D$-topology on $D(X \times Y)$ corresponds under the bijection above
to the smallest $\Delta$-generated topology containing the product topology
on $D(X) \times D(Y)$.

\begin{proposition}\label{pr:D-vs-compact-open}
For diffeological spaces $X$ and $Y$,
the $D$-topology on $C^\infty(X,Y)$ contains the compact-open topology.
\end{proposition}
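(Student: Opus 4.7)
The plan is to reduce openness in the $D$-topology on $C^\infty(X,Y)$ to a tube-lemma argument in $U \times X$, using the functional diffeology together with Lemma~\ref{le:product}. It suffices to show each subbasic set $A(K,W)$ for the compact-open topology is $D$-open in $C^\infty(X,Y)$, where $K$ is compact in $D(X)$ and $W$ is open in $D(Y)$. By the definition of the $D$-topology, I need to check that for every plot $p : U \to C^\infty(X,Y)$, the preimage $p^{-1}(A(K,W))$ is open in $U$.

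By the definition of the functional diffeology, such a plot $p$ corresponds to a smooth map $\hat{p} : U \times X \to Y$, namely $\hat{p}(u,x) = p(u)(x)$. Then
\[
p^{-1}(A(K,W)) = \{u \in U \mid \{u\} \times K \subseteq \hat{p}^{-1}(W)\}.
\]
Set $V = \hat{p}^{-1}(W) \subseteq U \times X$. Because $\hat{p}$ is smooth (hence $D$-continuous) and $W$ is $D$-open in $Y$, the set $V$ is $D$-open in the product diffeological space $U \times X$.

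Next I would invoke Lemma~\ref{le:product} with the first factor being $U$. Since $U$ is an open subset of a Euclidean space, $D(U) = U$ is locally compact Hausdorff, so the lemma identifies $D(U \times X)$ with the product topology $D(U) \times D(X)$. Hence $V$ is open in $U \times D(X)$.

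Now fix $u \in p^{-1}(A(K,W))$, so that $\{u\} \times K \subseteq V$. For each $k \in K$, openness of $V$ in the product topology provides a basic open neighborhood $U_k \times W_k$ of $(u,k)$ contained in $V$. Since $K$ is compact in $D(X)$, finitely many $W_{k_1}, \ldots, W_{k_n}$ cover $K$; setting $U' = U_{k_1} \cap \cdots \cap U_{k_n}$ gives an open neighborhood of $u$ in $U$ with $U' \times K \subseteq V$, hence $U' \subseteq p^{-1}(A(K,W))$. This is the standard tube-lemma maneuver, and it is really the only content of the argument beyond the invocation of Lemma~\ref{le:product}. There is no serious obstacle: the non-routine ingredient is packaged into that lemma, whose use is legitimate precisely because the first factor of the product is Euclidean and therefore locally compact Hausdorff.
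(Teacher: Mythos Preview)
Your proof is correct and follows essentially the same approach as the paper: reduce to showing each subbasic set $A(K,W)$ is $D$-open, pass via the functional diffeology to the adjoint map $U\times X\to Y$, invoke Lemma~\ref{le:product} to identify $D(U\times X)$ with the product topology, and finish with the tube-lemma argument. The only difference is that you spell out the tube lemma explicitly where the paper simply appeals to ``compactness of $K$ and the definition of the product topology.''
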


This result is a stepping stone to proving the stronger statement
that the $D$-topology contains the weak topology.

\begin{proof}
Recall that the compact-open topology has a subbasis which consists of the sets
$A(K,W) = \{f \in C^\infty(X,Y) \mid f(K) \subseteq W\}$,
where $K$ is a non-empty compact subset of $D(X)$ and $W$ is an open subset of $D(Y)$.
We will show that each $A(K,W)$ is $D$-open.
Let $\phi: U \ra C^\infty(X,Y)$ be a plot of $C^\infty(X,Y)$.
Since the corresponding map $\bar{\phi}: U \times X \ra Y$ is smooth,
$\bar{\phi}^{-1}(W)$ is open in $D(U \times X)$.
So for each $u \in \phi^{-1}(A(K,W))$,
$\{u\} \times K$ is in the open set $\bar{\phi}^{-1}(W)$.
Note that the natural map $D(U \times X) \ra D(U) \times D(X)$ is a 
homeomorphism by Lemma~\ref{le:product}.
By the compactness of $K$ and the definition of the product topology,
$V \times K \subseteq \bar{\phi}^{-1}(W)$ for some open neighborhood $V$ of $u$ in $U$,
which implies that $\phi^{-1}(A(K,W))$ is open in $U$.
Thus $A(K,W)$ is open in the $D$-topology.
\end{proof}

We will see in Example~\ref{compare} that the $D$-topology is almost
always strictly finer than the compact-open topology.

The next lemma will be used to show that the $D$-topology contains the
weak topology for function spaces between smooth manifolds.

\begin{lemma}\label{differential}
Let $U$ be an open subset in $\R^n$ and let $i$ be a multi-index in $\N^n$.
Then $D^i:C^\infty(U,\R) \ra C^\infty(U,\R)$ is smooth.
\end{lemma}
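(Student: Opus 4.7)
The plan is to unwind the functional diffeology on $C^\infty(U,\R)$ and reduce the statement to the elementary fact that partial derivatives of a smooth function of several variables are again smooth.

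First I would recall that, by the definition of the functional diffeology given after Theorem~\ref{th:cartesian-closed}, a map $\phi:V\to C^\infty(U,\R)$ from an open subset $V\subseteq\R^k$ is a plot if and only if the adjoint map $\bar\phi:V\times U\to\R$, defined by $\bar\phi(v,x)=\phi(v)(x)$, is smooth. So to show that $D^i:C^\infty(U,\R)\to C^\infty(U,\R)$ is smooth, it suffices to show that for every plot $\phi:V\to C^\infty(U,\R)$, the composite $D^i\circ\phi$ is again a plot, i.e., that $\overline{D^i\circ\phi}:V\times U\to\R$ is smooth.

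The key observation is that $D^i$ acts only on the $U$-variable, so
\[
\overline{D^i\circ\phi}(v,x) \;=\; (D^i\phi(v))(x) \;=\; \frac{\partial^{|i|}}{\partial x_1^{i_1}\cdots\partial x_n^{i_n}}\bar\phi(v,x),
\]
where the partial derivatives are taken with respect to the $U$-variables only. Since $\bar\phi$ is smooth on $V\times U$, so is any iterated partial derivative of $\bar\phi$ with respect to the $x$-coordinates. Hence $\overline{D^i\circ\phi}$ is smooth, which was exactly what was needed.

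There is no real obstacle here; the statement is essentially a tautology once one remembers that smoothness of a function of two sets of variables implies smoothness of its partial derivatives in either block, and that the functional diffeology is exactly the one making the currying/uncurrying bijection respect smoothness. The only thing to be a bit careful about is to check that differentiation really does commute with the adjunction in the way described, which is immediate from the pointwise definition of $D^i\phi(v)$.
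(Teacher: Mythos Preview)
Your argument is correct and is essentially identical to the paper's proof: both take a plot $\phi:V\to C^\infty(U,\R)$, pass to the adjoint $\bar\phi:V\times U\to\R$, and observe that differentiating $\bar\phi$ in the $U$-variables alone (the paper writes this as $D^j\bar\phi$ with $j=(0_m,i)$) yields the adjoint of $D^i\circ\phi$, which is therefore smooth.
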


\begin{proof}
Let $\phi:V \ra C^\infty(U,\R)$ be a plot with $\dim(V)=m$.
This means that the associated map $\bar{\phi}:V \times U \ra \R$ 
defined by $\bar{\phi}(v,u)=\phi(v)(u)$ is smooth.
Write $j$ for the multi-index $(0_m,i) \in \N^{m+n}$, with $0_m$ a sequence of $m$ zeros.
Then $D^j(\bar{\phi}):V \times U \ra \R$ is smooth.
Since $D^j(\bar{\phi})(v,u)=D^i(\phi(v))(u)$,
$D^i \circ \phi$ is a plot,
which implies the smoothness of $D^i$.
\end{proof}

Note that the smoothness of $D^i$ does not imply its continuity in general.
It is an easy exercise that for $|i| > 0$ and $n > 0$, $D^i$ is not continuous in the 
compact-open topology but is continuous in both the weak and strong topologies.
\medskip

Now we can compare the $D$-topology with the weak topology for 
function spaces between smooth manifolds:

\begin{proposition}\label{weakvsD}
Let $M$ and $N$ be smooth manifolds.
Then the $D$-topology on $C^\infty(M,N)$ contains the weak topology.
\end{proposition}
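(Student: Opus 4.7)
The plan is to show that every subbasic weak-open set
$N^r(f,(U,\phi),(V,\psi),K,\epsilon)$ is $D$-open in $C^\infty(M,N)$. Fixing an arbitrary plot $\Phi\colon W \to C^\infty(M,N)$ of the functional diffeology and a point $w_0 \in \Phi^{-1}(N^r(f,(U,\phi),(V,\psi),K,\epsilon))$, the task is to produce an open neighborhood of $w_0$ in $W$ on which $\Phi$ lands in this subbasic set.

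The first step is to localize so that the relevant charted compositions are defined uniformly in the parameter. Using continuity of $\Phi(w_0)$ and compactness of $K$, I would choose a compact $K'$ with $K \subseteq \mathrm{int}(K') \subseteq K' \subseteq U$ and $\Phi(w_0)(K') \subseteq V$. The compact-open set $A(K',V)$ is $D$-open by Proposition~\ref{pr:D-vs-compact-open}, so its $\Phi$-preimage is an open neighborhood $W_1$ of $w_0$ on which $\Phi(w)(K')\subseteq V$ for every $w \in W_1$. Setting $V' = \phi(\mathrm{int}(K'))\supseteq \phi(K)$, the rule $\bar\Psi(w,x)=\psi(\Phi(w)(\phi^{-1}(x)))$ is a smooth map $W_1\times V'\to \R^n$ (using $\phi^{-1}(V')\subseteq K'$ and $\Phi(W_1 \times K')\subseteq V$ so that $\psi$ is applicable), hence its adjoint $\Psi\colon W_1\to C^\infty(V',\R^n)$ is a plot.

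Next, Lemma~\ref{differential} enters, applied componentwise: for each multi-index $i$ with $|i|\le r$ the operator $D^i\colon C^\infty(V',\R)\to C^\infty(V',\R)$ is smooth, so each $D^i\circ\Psi_j$ is again a plot, and therefore $(w,x)\mapsto D^i(\psi\circ\Phi(w)\circ\phi^{-1})(x)$ is smooth, in particular jointly continuous, on $W_1\times V'$. Taking the maximum over the finitely many indices with $|i|\le r$ and over the compact set $\phi(K)$ of the norm difference from $D^i(\psi\circ f\circ\phi^{-1})(x)$ yields a scalar function $E\colon W_1\to[0,\infty)$ continuous in $w$ (by uniform continuity on compact sets). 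Since $E(w_0)<\epsilon$ by the hypothesis that $\Phi(w_0)\in N^r(f,(U,\phi),(V,\psi),K,\epsilon)$, the open set $\{w\in W_1 : E(w)<\epsilon\}$ is the desired neighborhood, and it is contained in $\Phi^{-1}(N^r(f,(U,\phi),(V,\psi),K,\epsilon))$ because $W_1$ already enforces the compact-open constraint $\Phi(w)(K)\subseteq V$.

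I expect the main obstacle to be this localization: arranging a single open neighborhood $V'$ of $\phi(K)$ on which $\psi\circ\Phi(w)\circ\phi^{-1}$ is simultaneously defined for all $w$ near $w_0$. Without such a common domain one cannot package these parameter-dependent maps into a single plot of $C^\infty(V',\R^n)$ and invoke Lemma~\ref{differential}. The resolution combines Proposition~\ref{pr:D-vs-compact-open} with a tube-lemma argument, which is legitimate because $D(W\times M)\cong D(W)\times D(M)$ by Lemma~\ref{le:product}.
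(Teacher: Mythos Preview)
Your proposal is correct and follows essentially the same approach as the paper's proof, which invokes Proposition~\ref{pr:D-vs-compact-open}, Lemma~\ref{differential}, compactness of $K$, openness of $V$, and finiteness of the set of multi-indices to conclude that the preimage of a subbasic weak-open set under a plot is open. Your write-up simply supplies the details the paper leaves to the reader, in particular the localization step producing a common domain $V'$ on which to apply Lemma~\ref{differential}.
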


\begin{proof}
Recall that the weak topology on $C^\infty(M,N)$ has the sets
$N^r(f,(U,\phi),(V,\psi),K,\epsilon)$,
described at the beginning of Section~\ref{se:function-spaces}, as a subbasis.

Let $p:W \ra C^\infty(M,N)$ be a plot, that is, $\bar{p}:W \times M \ra N$ 
given by $\bar{p}(w,x)=p(w)(x)$ is smooth.
If $w \in p^{-1}(N^r(f,(U,\phi),(V,\psi),K,\epsilon))$,
then by Proposition~\ref{pr:D-vs-compact-open}, Lemma~\ref{differential} and
the facts that $\phi$ and $\psi$ are diffeomorphisms, 
only finitely many differentials are considered, $K$ is compact and $V$ is open,
it is not hard to see that there exists an open neighborhood $W'$ of $w$ in $W$
such that $W' \subseteq p^{-1}(N^r(f,(U,\phi),(V,\psi),K,\epsilon))$.
Therefore, $N^r(f,(U,\phi),(V,\psi),K,\epsilon)$ is $D$-open.
\end{proof}

Since the weak topology is almost always strictly finer than the
compact-open topology, so is the $D$-topology:

\begin{example}\label{compare}
The $D$-topology on $C^\infty(\R,\R)$ is strictly finer than the compact-open topology.
To prove this, consider $U=N^1(\hat{0},(\R,\id),(\R,\id),[-1,1],1)$,
where $\hat{0}$ is the zero function.
This is open in the weak topology and thus is open in the $D$-topology.
We claim that no open neighborhood of $\hat{0}$ in the compact-open topology
of $C^\infty(\R,\R)$ is contained in $U$.
Otherwise, we may assume $\hat{0} \in A(K,(-\epsilon,\epsilon)) \subseteq U$
for some $\epsilon > 0$ and some compact $K$,
since if $\hat{0} \in A(K_1,W_1) \cap \cdots \cap A(K_m,W_m)$,
then $0 \in W_i$ for each $i$ and
$\hat{0} \in A(K_1 \cup \cdots \cup K_m, W_1 \cap \cdots \cap W_m) 
\subseteq A(K_1,W_1) \cap \cdots \cap A(K_m,W_m)$.
Then clearly $f: \R \ra \R$ defined by $f(x) = (\epsilon/2) \sin(2x/\epsilon)$
is in $A(K,(-\epsilon,\epsilon))$ for any $K$.
But $f$ is not in $U$ since $f'(0) = 1$.

Using a similar argument, with bump functions, one can show that when
$M$ and $N$ are smooth manifolds of dimension at least 1, then the
weak topology is strictly finer than the compact-open topology.
Thus the $D$-topology is strictly finer than the compact-open topology
in this situation.
\end{example}

In general, the weak topology is different from the $D$-topology on $C^\infty(M,N)$:

\begin{example}\label{weak-neq-D}
(1) Let $\N$ and $\{0,1\}$ be equipped with the discrete diffeologies.
Let $f:\N \ra \{0,1\}$ be the constant function sending everything to $0$,
and let $f_n:\N \ra \{0,1\}$ be defined by $f_n^{-1}(0)=\{0,1,\ldots,n\}$.
Note that $f_n$ converges to $f$ in the weak topology for the following reason.
Since each element in the subbasis of the weak topology
depends only on the values of the function and its derivatives on a compact subset of $\N$,
any of them containing $f$ must contain all $f_n$ for $n$ large enough.

On the other hand, we claim that for each $n$ there is no continuous path
$F : [0, 1] \to C^{\infty}(\N, \{0, 1\})$ with $F(0) = f_n$ and $F(1) = f$,
where the codomain is given the weak topology.
Since the weak topology contains the compact-open topology, such an $F$
gives rise to a continuous function $[0, 1] \times \N \to \{0, 1\}$,
i.e., a homotopy from $D(f_n)$ to $D(f)$.
Since these maps are clearly not homotopic, no such $F$ exists.

Thus the weak topology is not locally path-connected.
It follows from Proposition~\ref{La} that the weak topology is
different from the $D$-topology on $C^\infty(\N,\{0,1\})$.

The above argument in fact shows that every continuous path in
$C^{\infty}(\N, \{0, 1\})$ with respect to a topology containing
the compact-open topology is constant.
In particular, this holds for the $D$-topology, and since the
$D$-topology is $\Delta$-generated, it must be discrete.

(2) Let $X$ be a countable disjoint union of copies of $S^1$,
i.e., $X=\coprod_{i \in \N} X_i$ with each $X_i=S^1$.
Then the weak topology on $C^\infty(X,S^1)$ is not locally path-connected,
by a similar argument,
with $f:X \ra S^1$ defined by $f|_{X_i}=\id:X_i \ra S^1$,
and $f_n:X \ra S^1$ defined by
\[
f_n|_{X_i}=\begin{cases} \id, & \textrm{if $i=0,1,\ldots,n$} \\
                        -\id, & \textrm{otherwise.}
           \end{cases}
\]

(3) The weak topology on $C^\infty(\R^2 \setminus (\{0\} \times \Z),\, S^1)$ 
is not locally path-connected, by a similar argument,
with $f:\R^2 \setminus (\{0\} \times \Z) \ra S^1$ defined by
\[
f(x,y)=\frac{1-e^{2\pi (x+iy)}}{|1-e^{2\pi (x+iy)}|},
\]
and $f_n:\R^2 \setminus (\{0\} \times \Z) \ra S^1$ defined by
\[
f_n(x,y)=f(x,\phi_n(y)),
\]
where $\phi_n:\R \ra \R$ is a strictly increasing smooth function with
$\phi_n(t) = t$ for $|t| \leq n$ and $|\phi_n(t)| < n+1$ for all $t$.
\end{example}

The above examples all show that the weak topology is not locally path
connected and in particular that it is not $\Delta$-generated.
The $D$-topology is a $\Delta$-generated topology which contains
the weak topology, and the following theorem says that, given this, it is 
as close to the weak topology as possible.

\begin{theorem}\label{conj:D-top}
Let $M$ and $N$ be smooth manifolds.
Then the $D$-topology on $C^{\infty}(M,N)$ is the smallest $\Delta$-generated
topology containing the weak topology.
\end{theorem}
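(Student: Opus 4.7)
The plan is to prove equality by a two-sided inclusion, with the nontrivial direction reduced to a Special Curve Lemma for function spaces.

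Write $\tau_D$, $\tau_w$, and $\tau_0$ for the $D$-topology, the weak topology, and the smallest $\Delta$-generated topology containing $\tau_w$. One direction is immediate: $\tau_D$ is $\Delta$-generated by Proposition~\ref{delta} and contains $\tau_w$ by Proposition~\ref{weakvsD}, so by minimality $\tau_D \supseteq \tau_0$. For the reverse inclusion I would use that $\tau_0$ is the $\Delta$-coreflection of $\tau_w$, so that a set $U$ lies in $\tau_0$ iff $g^{-1}(U)$ is open in $\R$ for every $\tau_w$-continuous $g:\R \to C^\infty(M,N)$. Fix a $\tau_D$-open $U$, such a $g$, and $t_0 \in g^{-1}(U)$. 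Arguing by contradiction: if no open neighborhood of $t_0$ sits inside $g^{-1}(U)$, pick $t_n \to t_0$ with $f_n := g(t_n) \notin U$; by $\tau_w$-continuity, $f_n \to f := g(t_0)$ in the weak topology, i.e., uniformly together with all derivatives on compact subsets of each chart.

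The core of the argument is to extract a subsequence that converges \emph{fast} and feed it into a Special Curve Lemma for $C^\infty(M,N)$. By a diagonal argument over a countable exhaustion of $M$ by charted compact sets (using second countability of $M$), pass to a subsequence, still called $f_n$, with $n^k\,\|f_n-f\|_{C^r(K)} \to 0$ for every $k$, $r$, and every compact $K$ lying in a fixed chart of the exhaustion. The lemma to prove is: such an $f_n$ admits a smooth curve $c:\R \to C^\infty(M,N)$ with $c(0)=f$ and $c(1/n)=f_n$. To construct it, Whitney-embed $N$ in some $\R^K$ with tubular neighborhood $T$ and smooth retraction $\pi:T \to N$; for $n$ large enough, linearly interpolate $f_n$ and $f_{n+1}$ in $\R^K$ on $[\frac{1}{n+1},\frac{1}{n}]$ via a smooth reparametrization of $t$ whose derivatives vanish at the endpoints (as in Lemma~\ref{lem:scl}), set $c(t)=f$ for $t\le 0$, and apply $\pi$ pointwise in $x$. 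Fast convergence forces the mixed $(t,x)$-partials of the adjoint $\bar c:\R \times M \to N$ to be controlled on every compact set and to extend smoothly across $t=0$, so $\bar c \in C^\infty(\R\times M,N)$ and $c$ is a plot of $C^\infty(M,N)$. Since $U$ is $D$-open, $c^{-1}(U)$ is open in $\R$ and contains $0$; hence $f_n = c(1/n) \in U$ for large $n$, contradicting $f_n \notin U$.

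The main obstacle is the Special Curve Lemma itself, specifically the choice of a fast-convergence condition that is both extractable from any weakly convergent sequence and strong enough to make every mixed partial of $\bar c$ continuous at $t=0$. Away from $t=0$ only finitely many $f_n$'s participate locally in the definition of $c$, so smoothness there is routine; at $t=0$, the standard calculation matches each derivative of the smooth reparametrization---which scales like a power of $n$---against the $C^r$-seminorm of $f_n - f_{n+1}$, so the fast decay in $n$ dominates. The analogous construction for convenient vector spaces is carried out in \cite{KM}, and the retraction $\pi$ is the mechanism for transferring it from $\R^K$-valued maps back to $N$-valued ones.
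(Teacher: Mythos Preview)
Your overall strategy is the paper's: one containment is Propositions~\ref{delta} and~\ref{weakvsD}, and the other is reduced to a Special Curve Lemma for $C^\infty(M,N)$, which the paper isolates as Theorem~\ref{th:weak-scl} and invokes exactly where you do.

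The gap is in your construction of $c$. After extracting a fast subsequence you discard the path $g$ and work only with the sequence $(f_n)$, linearly interpolating consecutive terms in the ambient $\R^K$ and retracting via $\pi:T\to N$. But fast convergence in the weak sense controls $f_n-f$ only on compact subsets of $M$; when $M$ is non-compact the affine segment $[f_n(x),f_{n+1}(x)]$ in $\R^K$ need not lie in $T$ for all $x\in M$, and no further thinning of the sequence repairs this. Concretely, take $M=\R$, $N=S^1\subset\R^2$, $f\equiv 1$, and $f_n(x)=\exp\bigl(i\varphi(x-n)\bigr)$ for a smooth bump $\varphi$ supported in $[0,1]$ with maximum $\pi$. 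These are the values at $t_n=1/n$ of the weakly continuous path $g(t)(x)=\exp\bigl(i\varphi(x-1/t)\bigr)$, and on every compact set $f_n$ eventually equals $f$, so your fast-convergence hypothesis is met trivially for every subsequence; yet $f_n(n+\tfrac12)=-1$ while $f_{n+1}(n+\tfrac12)=1$, so the segment passes through $0$, outside any tubular neighborhood of $S^1$. The paper's Theorem~\ref{th:weak-scl} resolves this by \emph{keeping the path}: for each compact $K_k$ in an exhaustion it runs a Lebesgue-number argument on convex weak neighborhoods (Lemma~\ref{le:app}) to insert additional interpolation nodes $p_0(\tau_j)$ taken from the path itself between consecutive $f_j$'s, so that successive interpolants are close enough on $K_k$ for their convex combinations to map $K_k$ into $N$; the per-$k$ curves are then glued with a partition of unity. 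Your interpolation-and-retract shortcut, transplanted from the vector-space setting in~\cite{KM}, cannot substitute for this because it carries no information about $f_n(x)$ for $x$ outside any fixed compact set.
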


\begin{proof}
First note that by Proposition~\ref{weakvsD}, the $D$-topology contains the weak topology,
and by Proposition~\ref{delta}, the $D$-topology is $\Delta$-generated.
So we must prove that the $D$-topology on $C^\infty(M,N)$ is contained in 
every $\Delta$-generated topology containing the weak topology.

So let $\tau$ be a $\Delta$-generated topology containing the weak topology
and assume that $A \subseteq C^\infty(M,N)$ not open in $\tau$. 
Since $\tau$ is $\Delta$-generated, there is a $\tau$-continuous map
$p : \R \to \Ci(M,N)$ such that $p^{-1}(A)$ is not open in $\R$.
Since $\tau$ contains the weak topology, $p$ is weak continuous.
By composing with a translation in $\R$, we can assume that $0$
is a non-interior point of $p^{-1}(A)$.
Thus we can find a sequence $t_r$ of real numbers converging to $0$
so that $p(t_r) \not\in A$ for each $r$.
By Theorem~\ref{th:weak-scl}, there is a smooth curve 
$q : \R \to \Ci(M,N)$ such that $q(2^{-j}) = p(t_{r_j}) \not\in A$ 
for each $j$ and $q(0) = p(0)$.
This shows that $A$ is not open in the $D$-topology.
\end{proof}

Since every $\Delta$-generated space is locally path connected
(see Propositions~\ref{La} and~\ref{delta}),
the previous result is in fact a special case of the next result.

\begin{theorem}\label{th:lpc}
Let $M$ and $N$ be smooth manifolds.
Then the $D$-topology on $C^{\infty}(M,N)$ is the smallest locally path connected
topology containing the weak topology.
\end{theorem}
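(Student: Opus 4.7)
The plan is to adapt the argument of Theorem~\ref{conj:D-top}, where the $\Delta$-generated hypothesis immediately produced a $\tau$-continuous curve $\R \to C^\infty(M,N)$ witnessing non-openness. In the weaker setting of local path connectedness I have to build such a curve by hand, using the (metrizability and thus) first countability of the weak topology as a scaffold. By Propositions~\ref{La} and~\ref{weakvsD} the $D$-topology is itself a locally path connected topology containing the weak topology, so the theorem reduces to the minimality claim: every locally path connected topology $\tau$ on $C^\infty(M,N)$ containing the weak topology already contains the $D$-topology.

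To prove minimality, I would argue by contradiction: suppose $A \subseteq C^\infty(M,N)$ is $D$-open but not $\tau$-open, and pick a witness $f \in A$ every $\tau$-neighborhood of which meets the complement of $A$. Because $M$ and $N$ are Hausdorff and second countable, the weak topology on $C^\infty(M,N)$ is metrizable, so $f$ admits a decreasing countable weak-neighborhood basis $W_1 \supseteq W_2 \supseteq \cdots$. Each $W_n$ is $\tau$-open, and by local path connectedness the $\tau$-path-component $V_n$ of $f$ inside $W_n$ is also $\tau$-open. Pick $g_n \in V_n \setminus A$ and a $\tau$-continuous path $\alpha_n:[0,1] \to V_n$ with $\alpha_n(0)=f$ and $\alpha_n(1)=g_n$; since $\tau$ contains the weak topology, each $\alpha_n$ is automatically weak-continuous.

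Next I would stitch the $\alpha_n$ into a single weak-continuous map $p:\R \to C^\infty(M,N)$ with $p(0)=f$ and $p(1/n)=g_n$ for every $n \in \Z^+$: on $[1/(n+1),1/n]$ first traverse $\alpha_{n+1}$ backwards from $g_{n+1}$ to $f$ at the midpoint, then $\alpha_n$ forwards from $f$ to $g_n$, and extend constantly outside $[0,1]$. The image of $p$ on $[0,1/n]$ lies in $W_n$ (because on $[1/(m+1),1/m]$ with $m \geq n$ it sits in $V_m \cup V_{m+1} \subseteq W_m \subseteq W_n$), so the $W_n$'s being a weak-neighborhood basis of $f$ forces weak continuity of $p$ at $0$; elsewhere weak continuity is automatic. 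Theorem~\ref{th:weak-scl} applied to $p$ and the sequence $t_r = 1/r \to 0$ then yields a smooth curve $q:\R \to C^\infty(M,N)$ with $q(0)=f \in A$ and $q(2^{-j})=g_{r_j} \notin A$ along some subsequence $r_j$, whence $q^{-1}(A)$ contains $0$ but none of the $2^{-j}$, contradicting $D$-openness of $A$.

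The main obstacle is the construction and weak-continuity verification of $p$, which is the genuine analogue---for locally path connected, as opposed to $\Delta$-generated, topologies---of the first-countable path-stitching argument in Proposition~\ref{countable}; the rest of the proof is the same reduction already present in Theorem~\ref{conj:D-top}, once the weak-continuous curve $p$ is available to feed into Theorem~\ref{th:weak-scl}.
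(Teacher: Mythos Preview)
Your proposal is correct and follows essentially the same route as the paper's proof: both exploit first countability of the weak topology to obtain a countable neighbourhood basis $(W_n)$ of the witness point $f$, use local path connectedness of $\tau$ to find path-connected $\tau$-neighbourhoods of $f$ inside each $W_n$, concatenate the resulting $\tau$-continuous (hence weak-continuous) paths into a single weak-continuous curve $p$ hitting points outside $A$ arbitrarily close to $0$, and then invoke Theorem~\ref{th:weak-scl} to upgrade $p$ to a smooth curve. Your stitching and the verification of weak continuity at $0$ are spelled out more carefully than in the paper (which just says ``concatenate''), and you use metrizability rather than mere first countability, but the arguments are otherwise interchangeable.
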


\begin{proof}
Suppose $\tau$ is a locally path-connected topology that
contains the weak topology, $A$ is not $\tau$-open, and
$f \in A$ is not $\tau$-interior to $A$.
Since the weak topology on $\Ci(M,N)$ is first countable,
there is a countable weak neighborhood basis $(W_r)_{r=1}^\oo$ of $f$.
Contained in each $W_r$ there is a path-connected $\tau$-neighborhood $T_r$ of $f$.
For each $r$, choose an $f_r \in T_r \setminus A$ and a $\tau$-continuous
(and therefore weak continuous) path from $f$ to $f_r$ lying entirely in $T_r \sse W_r$.
We can concatenate these paths to produce a weak continuous path $p$
such that $p(0) = f$ and $p(2^{-r}) = f_r$.
By Theorem~\ref{th:weak-scl}, there is a smooth curve $q : \R \to \Ci(M,N)$
such that $q(0) = f$ and $q(2^{-j}) = f_{r_j}$.
Then $q^{-1}(A)$ contains $0$ but not $2^{-j}$ for any $j$, so $A$ is not
open in the $D$-topology.
\end{proof}

As a corollary, we have the following necessary and sufficient
condition for the weak topology to be equal to the $D$-topology:

\begin{corollary}\label{iff:weak=D}
Let $M$ and $N$ be smooth manifolds. 
Then the weak topology on $C^\infty(M,N)$ coincides with the $D$-topology 
if and only if the weak topology is locally path-connected.
\end{corollary}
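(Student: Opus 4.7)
The plan is to derive this corollary almost immediately from Theorem~\ref{th:lpc} together with Proposition~\ref{La} and Proposition~\ref{weakvsD}, since the hard analytic work (building smooth curves from weak-continuous sequences via the Special Curve Lemma) has already been absorbed into Theorem~\ref{th:lpc}. The statement is an ``if and only if'', so I would handle the two implications separately, each as a one-line deduction.

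For the forward direction, I would assume the weak topology coincides with the $D$-topology on $C^\infty(M,N)$. Since $C^\infty(M,N)$ with its functional diffeology is a diffeological space, Proposition~\ref{La} tells me its $D$-topology is locally path-connected. Transporting this along the assumed equality of topologies immediately gives local path-connectedness of the weak topology.

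For the reverse direction, I would assume the weak topology is locally path-connected. Then the weak topology is itself a locally path-connected topology containing the weak topology, so by the minimality clause in Theorem~\ref{th:lpc}, the $D$-topology is contained in the weak topology. Conversely, Proposition~\ref{weakvsD} says the $D$-topology contains the weak topology. The two inclusions together give equality.

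Since both directions are essentially just quoting earlier results, I do not expect any real obstacle; the only thing to watch is to cite Theorem~\ref{th:lpc} (rather than Theorem~\ref{conj:D-top}) for the reverse implication, because it is the locally path-connected version of the minimality statement that is needed here, and the $\Delta$-generated version would require the additional and unavailable assumption that the weak topology is $\Delta$-generated. The proof should therefore be no longer than a few sentences.
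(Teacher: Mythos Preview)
Your proposal is correct and matches the paper's own proof, which simply says the result follows from Theorem~\ref{th:lpc}. One small correction to your closing remark: the route via Theorem~\ref{conj:D-top} is not actually unavailable---the paper notes that the weak topology is second countable, so under the hypothesis of local path-connectedness, Proposition~\ref{countable} yields $\Delta$-generatedness and the argument goes through that way as well.
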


\begin{proof}
This follows from Theorem~\ref{th:lpc} (or from Theorem~\ref{conj:D-top},
using that the weak topology is second countable~\cite[pp.~35--36]{Hi}).
\end{proof}

This allows us to give a situation in which the $D$-topology and the weak topology coincide.
(See also Corollary~\ref{M-cpt}.)

\begin{corollary}\label{cor:N=R}
Let $M$ be a smooth manifold.
Then the weak topology on $C^\infty(M,\R^n)$ coincides with the $D$-topology.
\end{corollary}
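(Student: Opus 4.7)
The plan is to reduce immediately to Corollary~\ref{iff:weak=D}, which tells me that it suffices to show the weak topology on $C^\infty(M,\R^n)$ is locally path-connected. I will exploit the special feature of the codomain $\R^n$: it serves as a single global chart, so when writing down the subbasis of the weak topology I may always take $(V,\psi) = (\R^n,\id)$, making the set-inclusion condition $g(K) \subseteq V$ vacuous. Moreover $C^\infty(M,\R^n)$ carries an evident real vector space structure via pointwise operations.

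The key step is to verify that each subbasic set $N^r(f,(U,\phi),(\R^n,\id),K,\epsilon)$ is a \emph{convex} subset of $C^\infty(M,\R^n)$. Writing the defining inequalities as $\|D^i((f-g)\circ\phi^{-1})(x)\| < \epsilon$ for $x \in \phi(K)$ and $|i| \leq r$, one checks that if $g_0$ and $g_1$ both satisfy these inequalities then so does $g_t := (1-t)g_0 + t g_1$ for every $t \in [0,1]$, by linearity of $D^i$ and of $\blank \circ \phi^{-1}$ in $g$ together with the triangle inequality for the Euclidean norm. Since finite intersections of convex sets are convex, the standard basis for the weak topology, consisting of finite intersections of such subbasic sets, is a basis of convex open sets.

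Consequently, at each $f \in C^\infty(M,\R^n)$ the basis elements containing $f$ form a neighborhood basis of convex, hence path-connected (via straight lines), open sets. This makes the weak topology locally path-connected, so Corollary~\ref{iff:weak=D} yields the desired coincidence with the $D$-topology. The underlying observation is simply that the weak topology makes $C^\infty(M,\R^n)$ into a locally convex topological vector space; the only mild piece of bookkeeping is matching the convexity calculation with the explicit form of the subbasis, and this is routine, so I do not expect a substantive obstacle.
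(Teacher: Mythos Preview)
Your proposal is correct and follows essentially the same route as the paper: establish that the weak topology on $C^\infty(M,\R^n)$ has a basis of convex sets (the paper packages this step, including the reduction to the chart $(\R^n,\id)$, as Lemma~\ref{le:convex}), observe that linear paths are weak continuous so convex open sets are path-connected, and then invoke Corollary~\ref{iff:weak=D}.
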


\begin{proof}
By Lemma~\ref{le:convex}, the weak topology on $\Ci(M,\R^n)$ has a basis of
convex sets.  A linear path is smooth and hence weak continuous, so it 
follows that this topology is locally path connected.
\end{proof}

Our next goal is to show that the $D$-topology is contained in the strong
topology.  
We first need some preliminary results.

\begin{lemma}\label{le:locally-convex}
Let $M$ be a smooth manifold and let $N$ be an open subset of $\R^d$. 
Then the $D$-topology on $\Ci(M,N)$ is contained in any topology 
that contains the weak topology and has a basis of convex sets.
\end{lemma}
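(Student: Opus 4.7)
The plan is to reduce the statement to Theorem~\ref{th:lpc}, which characterizes the $D$-topology on $\Ci(M,N)$ as the smallest locally path-connected topology containing the weak topology (and which applies here since $N$, as an open subset of $\R^d$, is a smooth manifold). Hence it suffices to show that any topology $\tau$ on $\Ci(M,N)$ which contains the weak topology and admits a basis of convex sets is itself locally path-connected; the inclusion of the $D$-topology in $\tau$ is then automatic.

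To produce paths inside basic sets, let $U$ be a convex basic open set of $\tau$ and let $f,g \in U$. Even though $N$ need not be convex, the linear interpolation $h(t) = (1-t)f + tg$, considered a priori only in $\Ci(M,\R^d)$, stays in $U \subseteq \Ci(M,N)$ for every $t \in [0,1]$ by convexity of $U$. This gives a candidate path in $U$ from $f$ to $g$. Exploiting convexity of the basic set to absorb the possible non-convexity of $N$ is the one substantive point in the argument; once the path is known to lie in $\Ci(M,N)$, everything else is routine.

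It remains to verify that $h : [0,1] \to \Ci(M,N)$ is $\tau$-continuous, and since $\tau$ contains the weak topology, it is enough to check weak continuity. This is straightforward from the explicit description of the subbasic weak neighborhoods recalled at the start of Section~\ref{se:function-spaces}: for any admissible chart data $(V,\phi),(W,\psi)$, compact $K$, and multi-index $i$, the derivative $D^i(\psi \circ h(t) \circ \phi^{-1})(x)$ depends affinely on $t$, uniformly over $x \in \phi(K)$, so $h$ is continuous into the weak topology at every $t_0 \in [0,1]$. (Equivalently, the adjoint $[0,1] \times M \to N$ is smooth, so $h$ is smooth as a diffeological map, and Proposition~\ref{weakvsD} then yields weak continuity.) Thus every basic convex open set of $\tau$ is $\tau$-path-connected, $\tau$ is locally path-connected, and Theorem~\ref{th:lpc} completes the proof.
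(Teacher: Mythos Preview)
There is a genuine gap. You want to show that the linear path $h(t) = (1-t)f + tg$ is $\tau$-continuous, and you claim that ``since $\tau$ contains the weak topology, it is enough to check weak continuity.'' This implication goes the wrong way: if $\tau$ \emph{contains} the weak topology then $\tau$ is \emph{finer}, so $\tau$-continuity implies weak continuity, not conversely. Thus establishing weak continuity of $h$ (which you do correctly) says nothing about $\tau$-continuity, and you have not shown that the convex basic sets are $\tau$-path-connected. In general they need not be; the paper explicitly flags this point: a convex set is not necessarily path connected, precisely because linear paths may fail to be continuous for the given (finer) topology. Consequently Theorem~\ref{th:lpc} does not apply directly in the way you intend.

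The repair, which is what the paper does, is to stop trying to prove that $\tau$ is locally path-connected and instead re-examine the \emph{proof} of Theorem~\ref{th:lpc}. There the $\tau$-continuity of the paths inside the neighborhoods $T_r$ is used only once, namely to deduce that the concatenated path $p$ is weak continuous before invoking Theorem~\ref{th:weak-scl}. So if one replaces ``$\tau$-path-connected $T_r$'' by ``convex $T_r$ contained in $W_r$,'' the linear paths are weak continuous (indeed smooth), they stay in $T_r \subseteq W_r$ by convexity, and the rest of the argument goes through verbatim. Your computation showing that $h$ is weak continuous is exactly the ingredient needed for this modified argument; it is just being plugged in at the wrong place.
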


Here we say that a subset of $\Ci(M,N)$ is \dfn{convex} if it is
convex when regarded as a subset of the real vector space $\Ci(M,\R^d)$.

\begin{proof}
A convex set isn't necessarily path connected, since linear paths may
not be continuous.
Thus Theorem~\ref{th:lpc} doesn't apply directly.
However, in the proof of Theorem~\ref{th:lpc}, all that is used is
that the subsets $T_r$ are path connected in the \emph{weak} topology.
Since linear paths are smooth, they are weak continuous, and so the
proof goes through.
\end{proof}

\begin{lemma}\label{le:strong-open-subspace}
\hspace*{-1pt}Let $M$ be a smooth manifold and let $N$ be an open subset of $\R^d$. 
Then $\Ci(M,N)$ is an open subspace of $\Ci(M,\R^d)$ when both are
equipped with the strong topology.
\end{lemma}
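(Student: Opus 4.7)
The plan is to prove the lemma in two stages: first show that $\Ci(M,N)$ is strong-open in $\Ci(M,\R^d)$, and then show that the intrinsic strong topology on $\Ci(M,N)$ agrees with the subspace topology it inherits from $\Ci(M,\R^d)$.

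To establish openness, fix $f \in \Ci(M,N)$. Using paracompactness of $M$, choose a locally finite family of charts $(U_i,\phi_i)$ of $M$ together with compact sets $K_i \subseteq U_i$ whose union is $M$. Since each $f(K_i)$ is compact and lies in the open set $N \subseteq \R^d$, there exists $\epsilon_i > 0$ such that the Euclidean $\epsilon_i$-neighborhood of $f(K_i)$ is contained in $N$. The strong basic neighborhood
\[
W \;=\; \bigcap_i N^0(f,(U_i,\phi_i),(\R^d,\id),K_i,\epsilon_i)
\]
is then contained in $\Ci(M,N)$: any $g \in W$ satisfies $\|g(x)-f(x)\| < \epsilon_i$ for $x \in K_i$, so $g(K_i) \subseteq N$ for every $i$, and the $K_i$ cover $M$.

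For the topology comparison, the key observation is that since $N$ is open in $\R^d$, every chart $(V,\psi)$ of $N$ is simultaneously a chart of $\R^d$, and the subbasic weak set $N^r(f,(U,\phi),(V,\psi),K,\epsilon)$ is defined by literally the same conditions whether viewed in $\Ci(M,N)$ or in $\Ci(M,\R^d)$. Hence each intrinsic strong basic open set on $\Ci(M,N)$ equals a strong basic open set on $\Ci(M,\R^d)$ intersected with $\Ci(M,N)$, so the intrinsic topology is at most as fine as the subspace topology. Conversely, suppose $f \in \Ci(M,N)$ belongs to a strong basic neighborhood in $\Ci(M,\R^d)$ built from locally finite data $\{(U_i,\phi_i),(V_i,\psi_i),K_i,\epsilon_i\}$; then $f(K_i) \subseteq V_i \cap N$ for each $i$, so one may replace each $V_i$ by the open subset $V_i \cap N \subseteq N$ and restrict $\psi_i$ accordingly. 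The resulting $N$-chart data produces a strong basic neighborhood of $f$ in the intrinsic topology on $\Ci(M,N)$ sitting inside the original set intersected with $\Ci(M,N)$.

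The main obstacle will be keeping the chart bookkeeping and local finiteness consistent through these translations — in particular, verifying that the replacement $V_i \mapsto V_i \cap N$ genuinely yields valid chart data for $N$ and preserves local finiteness (which it does, since the index set and the manifold charts $(U_i,\phi_i)$ are unchanged, and the new target charts are nonempty because they contain $f(K_i)$). Once that is in place, combining the two stages yields the lemma.
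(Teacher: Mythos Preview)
Your proof is correct and follows essentially the same two-stage structure as the paper: establishing openness via a strong basic neighborhood built from locally finite chart data covering $M$, and identifying the intrinsic and subspace strong topologies by translating (sub)basic sets in both directions. The only cosmetic differences are that the paper cites Hirsch's functoriality result for one direction of the topology comparison (where you instead replace $V_i$ by $V_i\cap N$ by hand), and for openness the paper uses the target chart $(N,\id)$ with $\epsilon=1$ --- so that the requirement $g(K_i)\subseteq N$ is built into the subbasic set itself --- rather than choosing small $\epsilon_i$ via compactness as you do.
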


\begin{proof}
We first prove that the strong topology on $C^\oo(M,N)$ is the subspace
topology of the strong topology on $C^\oo(M,\R^d)$.
Since the inclusion map $N \to \R^d$ induces a continuous map in the
strong topologies (\cite[Exercise 10(b), page~65]{Hi}), the intersection
of a strong open set in $C^\oo(M,\R^d)$ with $C^\oo(M,N)$ is open in $C^\oo(M,N)$.
On the other hand, the data for each weak subbasic set $A$ in $C^\oo(M,N)$ defines
a weak subbasic set in $C^\oo(M,\R^d)$ whose intersection with $C^\oo(M,N)$ is $A$.
Since the strong subbasic sets are certain intersections of the weak subbasic
sets, our claim follows.

Now we show that $\Ci(M,N)$ is an open subset of $\Ci(M,\R^d)$, following
the argument in Lemma~\ref{le:wcco}.
For $f \in \Ci(M,N)$, choose charts for $M$ and $N$ and compact sets
$K_i \sse M$ as described in Lemma~\ref{le:charts}(b).
Then 
\[ f \in \cap_{i=1}^\oo \, N^0(f, (U_i, \phi_i), (N, \id), K_i, 1) \sse \Ci(M,N) , \] 
where each $N^0(f, (U_i, \phi_i), (N, \id), K_i, 1)$ 
is understood to be a subbasic set for $\Ci(M,\R^d)$.
So $\Ci(M,N)$ is open in the strong topology.
\end{proof}

\begin{theorem}\label{th:D-vs-strong}
Let $M$ and $N$ be smooth manifolds.
Then the $D$-topology on $\Ci(M,N)$ is contained in the strong topology.
\end{theorem}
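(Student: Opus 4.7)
The plan is to first prove the theorem when $N$ is an open subset of some $\R^d$, where Lemma~\ref{le:locally-convex} directly applies, and then reduce the general case to that one by means of a smooth tubular-neighborhood retraction provided by Whitney's embedding theorem.

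Assume first that $N$ is open in $\R^d$. By Lemma~\ref{le:locally-convex}, it suffices to show that the strong topology on $\Ci(M,N)$ contains the weak topology (which is standard) and admits a basis of sets that are convex when regarded inside $\Ci(M,\R^d)$. Given $f \in \Ci(M,N)$, I would choose a locally finite cover of $M$ by chart domains $(U_i,\phi_i)$ together with compact sets $K_i \subseteq U_i$ whose union is $M$, and set $\delta_i = \tfrac{1}{2}\mathrm{dist}(f(K_i),\R^d \setminus N) > 0$. Any strong basic neighborhood of $f$ in $\Ci(M,\R^d)$ may then be refined by intersecting it with the locally finite family $\{N^0(f,(U_i,\phi_i),(\R^d,\id),K_i,\delta_i)\}_i$; the result is still a strong basic set around $f$, is contained in $\Ci(M,N)$ (since any $g$ in it satisfies $g(K_i) \subseteq N$ for all $i$, and the $K_i$ cover $M$), and is convex in $\Ci(M,\R^d)$ as an intersection of convex sets of the form $N^r$. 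By Lemma~\ref{le:strong-open-subspace} these refinements form a strong basis at $f$ in $\Ci(M,N)$, verifying the convex-basis hypothesis.

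For general $N$, the strong Whitney embedding theorem realizes $N$ as a closed submanifold of some $\R^d$, and I would choose an open tubular neighborhood $U \subseteq \R^d$ of $N$ together with a smooth retraction $r: U \to N$. Let $i: N \hookrightarrow U$ be the inclusion, so $r \circ i = \id_N$. Post-composition gives smooth (hence $D$-continuous) maps $r_* : \Ci(M,U) \to \Ci(M,N)$ and $i_* : \Ci(M,N) \to \Ci(M,U)$ with $r_* \circ i_* = \id$, and $i_*$ is moreover continuous for the strong topologies since post-composition with a smooth map is strong-continuous \cite{Hi}. Given a $D$-open $A \subseteq \Ci(M,N)$, the preimage $r_*^{-1}(A)$ is $D$-open in $\Ci(M,U)$, hence strong-open by the previous case, and therefore $A = i_*^{-1}(r_*^{-1}(A))$ is strong-open in $\Ci(M,N)$. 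The main obstacle I anticipate is the convex-basis verification in the first case: strong basic sets involve locally finite (not merely finite) intersections, so the refinement must be set up so that the containment in $\Ci(M,N)$ genuinely holds globally on $M$ and so that the refined sets still form a basis for the subspace strong topology provided by Lemma~\ref{le:strong-open-subspace}.
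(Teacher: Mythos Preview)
Your overall strategy is the same as the paper's: reduce to $N$ open in $\R^d$ via a tubular neighbourhood retraction, then verify the convex-basis hypothesis of Lemma~\ref{le:locally-convex} for the strong topology. The reduction step is fine and matches the paper almost verbatim.

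There is, however, a genuine gap in your convex-basis verification. You take an \emph{arbitrary} strong basic neighbourhood of $f$ in $\Ci(M,\R^d)$, intersect it with the sets $N^0(f,(U_i,\phi_i),(\R^d,\id),K_i,\delta_i)$, and then assert that the result ``is convex in $\Ci(M,\R^d)$ as an intersection of convex sets of the form $N^r$''. But a set $N^r(f,(U,\phi),(V,\psi),K,\epsilon)$ is convex only when the target chart $\psi$ is affine; for a general chart on $\R^d$ it need not be. So the original strong basic neighbourhood you started with is not known to be convex, and intersecting it with convex sets does not help. The extra intersection with the $N^0$ sets forces the image into $N$ but contributes nothing to convexity.

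What is missing is exactly what the paper supplies: one first shows that the strong topology on $\Ci(M,\R^d)$ already has a basis of convex sets, by applying the \emph{proof} of Lemma~\ref{le:convex} piece by piece to a locally finite intersection $\cap_i N^r(f,(U_i,\phi_i),(V_i,\psi_i),K_i,\epsilon_i)$, replacing each $(V_i,\psi_i)$ by $(\R^d,\id)$ with a suitably shrunk $\epsilon_i'''$. Once that is done, Lemma~\ref{le:strong-open-subspace} immediately gives a convex basis on the open subspace $\Ci(M,N)$, and your additional $N^0$-intersection is unnecessary. So your outline is right, but the key convexity step must go through the target-chart replacement of Lemma~\ref{le:convex} rather than through the refinement you describe.
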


\begin{proof}
Choose an embedding $N \hookrightarrow \R^d$, 
and let $U$ be an open tubular neighborhood of $N$ in $\R^d$, 
so that the inclusion $i:N \ra U$ has a smooth retract $r:U \ra N$. 
Since $i$ and $r$ induce continuous maps in both the strong topology
(\cite[Exercise 10, page~65]{Hi}) and the $D$-topology (an easy argument),
$C^\infty(M,N)$ is a subspace of $C^\infty(M,U)$ when both are 
equipped with either of these topologies.
So if these topologies agree on $\Ci(M,U)$, then they agree on 
$\Ci(M,N)$.
Thus it suffices to prove the result when $N$ is open in $\R^d$.
Assume that this is the case.

We first prove that the strong topology on $\Ci(M,\R^d)$ has a basis
of convex sets.
If $A := \cap_i \, N^r(f,(U_i,\phi_i),(V_i,\psi_i),K_i,\epsilon_i)$
is a basic open set of the strong topology, as described at the beginning
of Section~\ref{se:function-spaces}, and if $g \in A$, then by the proof
of Lemma~\ref{le:convex},
\[ 
  g \in \cap_i \, N^r(g,(U_i,\phi_i),(\R^d,\id),K_i,\epsilon_i''') \sse A ,
\]
which shows that $A$ is covered by convex strong open sets.

By Lemma~\ref{le:strong-open-subspace}, $\Ci(M,N)$ is open in $\Ci(M,\R^d)$,
so it too has a basis of convex sets.
Thus, by Lemma~\ref{le:locally-convex}, 
the $D$-topology on $\Ci(M,N)$ is contained in the strong topology.
\end{proof}

\begin{corollary}\label{M-cpt}
Let $M$ and $N$ be smooth manifolds with $M$ compact.
Then the $D$-topology on $C^\infty(M,N)$ coincides with the weak topology.
\end{corollary}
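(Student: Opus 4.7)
The plan is to sandwich the $D$-topology between the weak and the strong topologies, and then use the known fact that for compact domains these two coincide. Concretely, Proposition~\ref{weakvsD} gives the inclusion of the weak topology into the $D$-topology, and Theorem~\ref{th:D-vs-strong} gives the inclusion of the $D$-topology into the strong topology. So it suffices to invoke the classical result (recalled just after the description of the strong topology, and cited from~\cite[pp.~35--36]{Hi}) that when $M$ is compact, the strong topology on $C^\infty(M,N)$ coincides with the weak topology. Chaining these three facts yields
\[
  \text{weak} \ \subseteq\ D\text{-topology} \ \subseteq\ \text{strong} \ =\ \text{weak},
\]
so all inclusions are equalities and the corollary follows.

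I would therefore write the proof in essentially one sentence, citing the three ingredients in order. There is no real obstacle: the compactness of $M$ is used only in the classical identification of the weak and strong topologies, and everything else is already proved in the paper. Alternatively, one could deduce the corollary from Corollary~\ref{iff:weak=D} by checking that the weak topology on $C^\infty(M,N)$ is locally path-connected when $M$ is compact (for instance, by covering $M$ with finitely many charts, embedding $N$ in some $\R^d$ with a tubular neighborhood, and using that the resulting basic weak neighborhoods of a map $f$ can be chosen convex modulo the retraction, as in the proof of Theorem~\ref{th:D-vs-strong}); but the sandwich argument via Theorem~\ref{th:D-vs-strong} is shorter and more direct, so that is what I would present.
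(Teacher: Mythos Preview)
Your proof is correct and matches the paper's own argument essentially verbatim: the paper also sandwiches the $D$-topology between the weak topology (Proposition~\ref{weakvsD}) and the strong topology (Theorem~\ref{th:D-vs-strong}) and then invokes their coincidence for compact $M$. Your aside about the alternative route via Corollary~\ref{iff:weak=D} is accurate but, as you say, unnecessary here.
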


\begin{proof}
The $D$-topology is trapped between the weak topology (Proposition~\ref{weakvsD})
and the strong topology (Theorem~\ref{th:D-vs-strong}), and these coincide
when $M$ is compact.
\end{proof}

Here is one application of our results:

\begin{corollary}\label{D-open}
Let $M$ be a smooth compact manifold,
and let $\Diff(M)$ be the set of all diffeomorphisms from $M$ to itself
with the subset diffeology of $C^\infty(M,M)$.
Then $\Diff(M)$ is $D$-open in $C^\infty(M,M)$.
Hence, $\Diff(M)$ is an embedded subset of $C^\oo(M,M)$~(Definition~\ref{de:embedded}).
\end{corollary}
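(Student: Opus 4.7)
The plan is to reduce the $D$-openness of $\Diff(M)$ to a classical fact in the weak topology via Corollary~\ref{M-cpt}, and then derive $\tau_1(\Diff(M)) = \tau_2(\Diff(M))$ from Lemma~\ref{l2}.

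\medskip

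First, since $M$ is compact, Corollary~\ref{M-cpt} tells us that the $D$-topology on $C^\infty(M,M)$ agrees with the weak topology. So it is enough to show that $\Diff(M)$ is open in the weak topology on $C^\infty(M,M)$.

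\medskip

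To prove weak openness, I would fix $f \in \Diff(M)$ and exhibit an explicit weak-basic neighborhood of $f$ lying inside $\Diff(M)$. Choose finitely many chart pairs $(U_i,\phi_i)$ on the domain and $(V_i,\psi_i)$ on the codomain together with compact $K_i \subseteq U_i$ covering $M$ and with $f(K_i) \subseteq V_i$, and consider
\[
W = \bigcap_i N^1(f,(U_i,\phi_i),(V_i,\psi_i),K_i,\epsilon)
\]
for a uniformly small $\epsilon > 0$. For any $g \in W$: (a) $dg_x$ is close to $df_x$ at every $x \in M$, so by continuity $dg_x$ is invertible everywhere and $g$ is a local diffeomorphism; (b) for $\epsilon$ small enough $g$ is globally injective, combining the inverse function theorem (for nearby pairs) with compactness of $M$ (to rule out distant pairs using how $C^0$-close $g$ is to $f$); (c) $g(M)$ is open (local diffeomorphism) and closed (compact image in a Hausdorff manifold), hence $g$ surjects onto each component of $M$ that $f$ hits. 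Therefore $g \in \Diff(M)$. This is a standard result and can also be cited from Hirsch~\cite[Ch.~2, Thm.~1.7]{Hi}.

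\medskip

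Finally, with $\Diff(M)$ shown to be $D$-open in $C^\infty(M,M)$, Lemma~\ref{l2} immediately gives the equality $\tau_1(\Diff(M)) = \tau_2(\Diff(M))$ in the sense of Section~\ref{sse:subset}. The main obstacle is step (b) above: local injectivity is immediate from the inverse function theorem, but globally one must control the size of $\epsilon$ relative to a Lebesgue-number-type constant for $f$, so that no two points of $M$ outside a common chart can be collapsed. This quantitative compactness argument is the technical heart of the proof, and it is the reason one typically appeals to the classical statement in Hirsch rather than reproving it in place.
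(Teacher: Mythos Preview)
Your proposal is correct and follows exactly the paper's approach: reduce $D$-openness to weak openness via Corollary~\ref{M-cpt}, invoke the classical fact that $\Diff(M)$ is weak-open (the paper simply cites~\cite[Theorem~2.1.7]{Hi}, which you also mention), and then apply Lemma~\ref{l2} for $\tau_1=\tau_2$. The only difference is that you additionally sketch the Hirsch argument, which the paper omits.
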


\begin{proof}
As mentioned in Corollary~\ref{M-cpt}, when $M$ is compact, the weak,
strong and $D$-topo\-lo\-gies on $\Ci(M,M)$ all coincide.
The first claim is then the restatement of~\cite[Theorem~2.1.7]{Hi},
and the second part follows from Lemma~\ref{l2}.
\end{proof}

Similarly, many results in~\cite[Chapter 2]{Hi} can be translated into
results for the $D$-topology.

\medskip

When $M$ is non-compact and $N$ has positive dimension, the weak topology is
different from the strong topology~\cite[pp.~35--36]{Hi}.
Since the weak topology and the $D$-topology coincide for $C^{\infty}(M, \R^n)$,
it follows that the $D$-topology and the strong topology are different
for $C^{\infty}(M, \R^n)$ when $M$ is non-compact.
We can make this explicit in the next example.

\begin{example}\label{cinfty}
It is not hard to show that the strong topology on $C^\infty(\R,\R)$ has a basis
$\{B^k_\delta(f) \mid k \in \N,\, \delta:\R \ra \R^+ \text{ continuous},\, f \in C^\infty(\R,\R) \}$,
where $B^k_\delta(f)=\{ g \in C^\infty(\R,\R) \mid\! \sum_{i=0}^k \, (f^{(i)}(x)-g^{(i)}(x))^2<\delta(x)$
for each $x \in \R \, \}$.
On the other hand, since the $D$-topo\-logy agrees with the weak topology on $C^\infty(\R,\R)$, it has a basis
$\{\tilde{B}^k_\epsilon(f) \mid k \in \N, \epsilon \in \R^+, f \in C^\infty(\R,\R)\}$,
where $\tilde{B}^k_\epsilon(f)=\{ \, g \in C^\infty(\R,\R) \mid \sum_{i=0}^k \, (f^{(i)}(x)-g^{(i)}(x))^2<\epsilon$
for each $x$ in $[-k,k]\}$.
It follows that the strong topology is strictly finer than the $D$-topology on $C^\infty(\R,\R)$.
\end{example}

On the other hand, it can be the case that the $D$-topology is different from the
weak topology but agrees with the strong topology.  For example, this happens
in case (1) of Example~\ref{weak-neq-D}, where it is easy to see that the
strong topology is also discrete.
\medskip

\begin{remark}
The book~\cite{KM} also studies function spaces between %
smooth manifolds, but uses a different smooth structure on the function space
to ensure that the resulting object has the desired local models.
By~\cite[Lemma~42.5]{KM}, their smooth structure has fewer smooth curves
than the diffeology studied here, and as a result the natural topology
discussed in~\cite[Remark~42.2]{KM} is larger than the $D$-topology.
In fact, according to that remark, it is larger than the strong topology
(which is called the $WO^\oo$-topology in~\cite{KM}).
\end{remark}

\appendix
\section{The weak topology on function spaces}\label{app}
\normalsize

In this Appendix, our goal is to prove a theorem about the weak topology on
function spaces which is analogous to the Special Curve Lemma
(Lemma~\ref{lem:scl}).  This is Theorem~\ref{th:weak-scl}.
Before proving the theorem, we collect together and prove some
basic results about the weak topology on function spaces,
and state the following lemma.

\begin{lemma}\label{le:charts}
Let $M$ and $N$ be smooth manifolds.
\begin{enumerate}[(a)]
\item There exists a locally finite countable atlas
$\{(U_i,\phi_i)\}_{i \in \N}$ of $M$ and a compact set $K_i \sse U_i$, for each $i$,
such that $M = \cup_i \, K_i^o$, where $K_i^o$ denotes the interior of $K_i$.
\item For any smooth map $f : M \to N$, there exist $\{(U_i, \phi_i, K_i)\}_{i \in \N}$
as in (a) and a countable atlas $\{(V_i,\psi_i)\}_{i \in \N}$ of $N$
such that $f(K_i) \sse V_i$ for each $i$. \qed
\end{enumerate}
\end{lemma}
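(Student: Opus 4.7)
For (a), my plan is to combine a compact exhaustion of $M$ with finite local coverings, exploiting the fact that $M$ is Hausdorff, second countable, and locally compact. First I would pick a sequence of compact sets $\varnothing = C_0 \sse C_1 \sse C_2 \sse \cdots$ with $C_n \sse C_{n+1}^o$ and $M = \cup_n C_n$. For each $n \ge 1$, the compact set $A_n := C_n \setminus C_{n-1}^o$ lies in the open set $C_{n+1}^o \setminus C_{n-2}$ (convention: $C_{-1} = \varnothing$). I would cover each $A_n$ by finitely many coordinate charts $(W,\phi)$ with $W \sse C_{n+1}^o \setminus C_{n-2}$, each $W$ containing a compact neighborhood $K \sse W$ (say, the preimage under $\phi$ of a closed Euclidean ball) chosen so that the interiors of the $K$'s cover $A_n$. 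Concatenating over all $n$ produces a countable collection $\{(U_i, \phi_i, K_i)\}_{i \in \N}$ with $M = \cup_i K_i^o$. Local finiteness holds because any $x \in M$ lies in some $C_m^o$, and $C_m^o$ meets only those $U_i$'s coming from iterations $n \le m+1$ (since $n - 2 \ge m$ forces $U_i \sse C_{n+1}^o \setminus C_{n-2}$ to miss $C_m^o \sse C_{n-2}$), which amount to finitely many charts in total.

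For (b), my plan is to modify the construction in (a) so that each chart used is small enough that $f$ sends it into a chart of $N$. Fix a countable atlas $\{(V_j, \psi_j)\}_{j \in \N}$ of $N$. For each point $x$ selected during the construction of (a), also pick some $j(x) \in \N$ with $f(x) \in V_{j(x)}$; by continuity of $f$, the open set $f^{-1}(V_{j(x)})$ contains $x$, so I can shrink the chart and compact neighborhood around $x$ to lie inside $f^{-1}(V_{j(x)})$, ensuring $f(K) \sse V_{j(x)}$. This yields $\{(U_i, \phi_i, K_i)\}_{i \in \N}$ as in (a), together with an index $j(i)$ such that $f(K_i) \sse V_{j(i)}$ for each $i$. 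Relabelling $V_i := V_{j(i)}$ gives the required property, except that possibly some $V_j$'s are missing from the list. To repair this, for each $j \in \N$ not in the image of $j(\cdot)$ I would pad the sequence by inserting a dummy entry with $V_i := V_j$, taking $(U_i, \phi_i)$ to be any chart from the existing collection and $K_i := \varnothing$, so that $f(K_i) = \varnothing \sse V_i$ holds vacuously while $\{V_i\}_{i \in \N}$ now covers all of $N$.

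The main obstacle is organizing the construction in (a) so that local finiteness and the covering property $M = \cup_i K_i^o$ both hold simultaneously; the exhaustion-by-compact-annuli approach does this cleanly, since restricting each iteration's charts to the thick annulus $C_{n+1}^o \setminus C_{n-2}$ gives local finiteness for free. Part (b) is then mostly bookkeeping, the only subtlety being the padding argument needed to make $\{V_i\}$ cover all of $N$ rather than merely $f(M)$.
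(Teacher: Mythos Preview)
The paper does not actually prove this lemma: it is stated with a tombstone and no argument, treated as a standard fact about smooth manifolds. Your exhaustion-by-annuli construction for (a) is the standard one and is correct, and your idea for (b) of shrinking each chart around $x$ to lie inside some $f^{-1}(V_{j(x)})$ is also the natural approach and works.

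One small wrinkle in your padding step: if you repeat a single existing chart $(U_{i_0}, \phi_{i_0})$ infinitely many times to accommodate infinitely many missing $V_j$'s, the resulting indexed family $\{(U_i, \phi_i)\}_{i \in \N}$ is no longer locally finite in the indexed sense (every neighbourhood of a point of $U_{i_0}$ meets infinitely many members of the family). The simplest repair is to pad with empty charts $U_i = K_i = \varnothing$ instead, or simply to read ``countable'' as ``at most countable'' throughout and not insist on a bijection with $\N$; either convention is harmless for the uses of the lemma later in the paper. This is bookkeeping, not a mathematical gap.
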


Recall that for $M$ and $N$ smooth manifolds, the weak topology
on $\Ci(M,N)$ has as subbasic neighbourhoods the sets 
$N^r(f, (U,\phi), (V,\psi), K, \epsilon)$ described at
the beginning of Section~\ref{se:function-spaces}.

\begin{lemma}\label{le:weak-contains-compact-open}\label{le:wcco}
Let $M$ and $N$ be smooth manifolds.
Then the weak topology on $\Ci(M,N)$ contains the compact-open
topology.
\end{lemma}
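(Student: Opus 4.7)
The plan is to show that each subbasic set $A(K,W)$ of the compact-open topology is open in the weak topology. Given $f \in A(K,W)$, I must produce a finite intersection of weak subbasic neighbourhoods of $f$ that lies inside $A(K,W)$. The key observation is that the $r=0$ weak subbasic set $N^0(f,(U,\phi),(V,\psi),K',\epsilon)$ controls only the values of $g$ on $K'$ and forces $g(K') \sse V$; if in addition $V \sse W$, then every $g$ in this set satisfies $g(K') \sse W$.

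First I would localize using charts. For each $x \in K$, choose a chart $(V_x,\psi_x)$ of $N$ with $f(x) \in V_x \sse W$ (possible since $W$ is open). By continuity of $f$, choose a chart $(U_x,\phi_x)$ of $M$ around $x$ with $f(U_x) \sse V_x$. Then pick a compact neighbourhood $K_x$ of $x$ inside $U_x$. By compactness of $K$, finitely many interiors $K_{x_1}^o,\dots,K_{x_n}^o$ cover $K$, and the sets $K_i := K \cap K_{x_i}$ are compact, contained in $U_{x_i}$, and cover $K$.

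Now consider the weak open set
\[
W_f := \bigcap_{i=1}^n N^0\bigl(f,(U_{x_i},\phi_{x_i}),(V_{x_i},\psi_{x_i}),K_i,1\bigr).
\]
It contains $f$, since $f(K_i) \sse f(U_{x_i}) \sse V_{x_i}$ and the norm condition is trivially satisfied when $g=f$. For any $g \in W_f$ and any $i$, the subbasic condition gives $g(K_i) \sse V_{x_i} \sse W$. Since $K = \cup_i K_i$, this yields $g(K) \sse W$, i.e.\ $g \in A(K,W)$. Thus $f \in W_f \sse A(K,W)$, and $A(K,W)$ is weak open.

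There is no serious obstacle here: the main point is simply the covering argument via compactness of $K$. In particular, no derivatives enter (only $r=0$ subbasic sets are used), so the smooth structure on $M$ and $N$ plays essentially no role beyond providing the charts needed to state the weak topology.
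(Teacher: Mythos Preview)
Your proof is correct and follows essentially the same approach as the paper: both cover $K$ by finitely many compact pieces $K_i$ lying in charts, and use the $r=0$ subbasic sets $N^0(f,(U_i,\phi_i),(V_i,\psi_i),K_i,1)$ with $V_i \sse W$ to force $g(K_i)\sse W$. The only cosmetic difference is that the paper invokes a prepackaged chart lemma (Lemma~\ref{le:charts}(b)) and then intersects the codomain charts with $W$, whereas you build the charts by hand with $V_x \sse W$ from the outset.
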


\begin{proof}
Consider $A(K,W) = \{ g \in \Ci(M,N) \mid g(K) \sse W\}$ where
$K \sse M$ is compact and $W \sse N$ is open.
Let $f \in A(K,W)$.
Choose charts for $M$ and $N$ and compact sets $K_i$ as described in Lemma~\ref{le:charts}(b).
Choose $j$ so that $K \sse \cup_{i=1}^j \, K_i$.
Then 
\[ f \in \cap_{i=1}^j \, N^0(f, (U_i, \phi_i), (V_i \cap V, \psi_i), K_i \cap K, 1) \sse A(K,W) , \]
so $A(K,W)$ is open in the weak topology.
\end{proof}

\begin{lemma}\label{le:convex}
Let $M$ be a smooth manifold. 
The sets $N^r(f, (U,\phi), (\R^d,\id), K, \epsilon)$, where
$r \in \N$, $f \in \Ci(M,\R^d)$, $(U,\phi)$ is a chart of $M$, $K \sse U$ is compact
and $\epsilon > 0$, form a subbasis for the weak topology on $\Ci(M,\R^d)$.
In particular, the weak topology on $\Ci(M,\R^d)$ has a basis of convex sets.
\end{lemma}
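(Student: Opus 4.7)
The plan is to prove the subbasis claim first and then read off the convexity corollary. Since every simplified set $N^r(f,(U,\phi),(\R^d,\id),K,\epsilon)$ is itself a weak-subbasic set, I only need the reverse comparison: show that each standard weak-subbasic set $N^r(f,(U,\phi),(V,\psi),K,\epsilon)$ is open in the topology generated by the simplified ones. It is enough to exhibit, for any $g$ in the standard set, a simplified subbasic neighbourhood of $g$ contained in it.

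To do this, fix $g\in N^r(f,(U,\phi),(V,\psi),K,\epsilon)$. Because $g(K)$ is a compact subset of the open set $V\sse\R^d$, I can choose a compact neighbourhood $L\sse V$ of $g(K)$; the partial derivatives of $\psi$ through order $r$ are then bounded on $L$. The Fa\`a di Bruno formula writes $D^i(\psi\circ u)(x)$ for $|i|\le r$ as a universal polynomial in the finite set of values $\{D^j\psi(u(x)):|j|\le r\}$ and $\{D^k u(x):|k|\le r\}$. Consequently, there is $\delta>0$ such that whenever a smooth $h:M\to\R^d$ satisfies
\[
\|D^i(h\circ\phi^{-1})(x)-D^i(g\circ\phi^{-1})(x)\|<\delta \qquad\text{for all } x\in\phi(K),\ |i|\le r,
\]
two things follow. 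First, the $i=0$ condition confines $h(K)$ to a $\delta$-neighbourhood of $g(K)$, which (for $\delta$ small) lies in $L\sse V$. Second, uniform continuity of the Fa\`a di Bruno polynomials on the compact set of admissible jet tuples (valued in $L$) forces $\|D^i(\psi\circ h\circ\phi^{-1})-D^i(\psi\circ g\circ\phi^{-1})\|$ to be as small as we wish on $\phi(K)$ for $|i|\le r$. Using the slack between $g$ and the $\epsilon$-bound defining the standard subbasic set, the triangle inequality then places $h$ in $N^r(f,(U,\phi),(V,\psi),K,\epsilon)$. Thus $N^r(g,(U,\phi),(\R^d,\id),K,\delta)$ is a simplified subbasic neighbourhood of $g$ contained in the given standard subbasic set, as required.

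The convexity corollary is then immediate: in each simplified subbasic set, $D^i$ acts linearly on $\Ci(M,\R^d)$ and the condition $\|D^i(h\circ\phi^{-1})(x)-D^i(f\circ\phi^{-1})(x)\|<\epsilon$ is a convex constraint on $h$ for every fixed $x$ and $i$, so the intersection over $x\in\phi(K)$ and $|i|\le r$ is convex. Finite intersections of convex sets remain convex, so finite intersections of simplified subbasic sets form a base of convex sets for the weak topology.

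The main obstacle is the quantitative chain-rule estimate in the second paragraph. It is standard in spirit, but one must be careful about the order of quantifiers: first choose the auxiliary compact neighbourhood $L$ of $g(K)$ inside $V$ so that $\psi$ and its derivatives through order $r$ are uniformly bounded on $L$, and only afterwards pick $\delta$, so that the $C^0$-smallness implicit in the simplified subbasic set really does keep $h(K)$ inside $L$ throughout the Fa\`a di Bruno bookkeeping. Everything else is routine.
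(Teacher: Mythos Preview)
Your proof is correct and takes essentially the same approach as the paper's. The paper organizes the argument as a three-step chain $A''' \sse A'' \sse A' \sse A$ (first re-center the subbasic set at $g$, then replace $(V,\psi)$ by $(V,\id)$ via chain-rule bounds on the derivatives of $\psi$, then enlarge the codomain from $V$ to $\R^d$ while keeping $h(K)\sse V$), whereas you fold these into a single step with the compact neighbourhood $L$ and the Fa\`a di Bruno estimate; the underlying ideas and the convexity corollary are identical.
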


\begin{proof}
Consider a subbasic set $A := N^r(f, (U,\phi), (V,\psi), K, \epsilon)$
containing a function $g$.
First observe that $g \in A' := N^r(g, (U,\phi), (V,\psi), K, \epsilon') \sse A$
for some $\epsilon'$, since these sets are determined by comparing
finitely many norms on a compact set.
One can then show that $A'' := N^r(g, (U,\phi), (V,\id), K, \epsilon'') \sse A'$
for some $\epsilon''$, using bounds on the derivatives of $\psi$ on $g(K)$.
Finally, we claim that $A''' := N^r(g, (U,\phi), (\R^d,\id), K, \epsilon''') \sse A''$
for some $\epsilon'''$.
To see this, cover $g(K)$ by finitely many open balls $B_1, \ldots, B_n$ such that
$2 B_{\ell} \sse V$ for each $\ell$, and let $\epsilon'''$ be the minimum of the
radii and $\epsilon''$.
Then if $h \in A'''$ and $x \in K$, we have $g(x) \in B_{\ell}$ for some $\ell$
and $|g(x) - h(x)| < \epsilon'''$, so $h(x) \in 2 B_{\ell} \sse V$.
\end{proof}

For $N$ open in $\R^d$, we will implicitly use that the inclusion map induces a continuous map
$C^\oo(M,N) \sse C^\oo(M,\R^d)$ in the weak topologies, which follows from the fact that
the weak topology is functorial in the second variable (see \cite[Exercise 10(a), page~64]{Hi}).
(In fact, the weak topology and the subspace topology on $C^\oo(M,N)$ agree,
but we won't need this.)
Although $C^\oo(M,N)$ need not be an open subset of $C^\oo(M,\R^d)$, 
it has the following weaker property.

\begin{lemma}\label{le:app}
Let $M$ be a smooth manifold and let $N$ be an open subset of $\R^d$. 
If $f \in C^\oo(M,N)$ and $K$ is a compact subset of $M$, then there is 
a convex basic weak $C^\oo(M,\R^d)$-neighborhood of $f$ whose elements map $K$ into $N$. 
\end{lemma}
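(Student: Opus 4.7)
The plan is to exploit compactness of $f(K)$ inside the open set $N \sse \R^d$ to pick a uniform Euclidean tolerance $\delta > 0$, and then to package this tolerance into a finite intersection of convex weak subbasic neighbourhoods of $f$ of the type provided by Lemma~\ref{le:convex}.

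First I would note that $f(K)$ is a compact subset of the open set $N \sse \R^d$, so by an easy distance-to-complement argument there exists $\delta > 0$ such that every $y \in \R^d$ with $\mathrm{dist}(y, f(K)) < \delta$ lies in $N$. Next I would invoke Lemma~\ref{le:charts}(a) to obtain a locally finite countable atlas $\{(U_i,\phi_i)\}_{i \in \N}$ of $M$ together with compact sets $K_i \sse U_i$ satisfying $M = \bigcup_i K_i^{\circ}$. Compactness of $K$ then lets me choose $n$ with $K \sse \bigcup_{i=1}^n K_i^{\circ}$; setting $L_i := K \cap K_i$ for $i = 1,\dots,n$ gives compact sets with $L_i \sse U_i$ and $K = \bigcup_{i=1}^n L_i$.

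Now define
\[
 A \;:=\; \bigcap_{i=1}^n N^0\!\bigl(f,(U_i,\phi_i),(\R^d,\id),L_i,\delta\bigr),
\]
viewed as a basic open set in the weak topology on $\Ci(M,\R^d)$; by Lemma~\ref{le:convex} each factor is convex, so $A$ is convex as well. Clearly $f \in A$. For any $g \in A$ and any $x \in K$, pick $i \le n$ with $x \in L_i$; the defining condition at $\phi_i(x)$ (with the identity chart on $\R^d$) forces $\|f(x) - g(x)\| < \delta$, whence $g(x)$ lies within Euclidean distance $\delta$ of $f(K)$ and therefore in $N$. Thus every element of $A$ maps $K$ into $N$, and $A$ is the desired neighbourhood.

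There is no genuine obstacle here; the only points requiring attention are the finiteness of the intersection (needed so that $A$ is a \emph{basic} weak open set rather than only an intersection of countably many subbasic ones) and the use of the identity chart on $\R^d$ so that the subbasic sets are automatically convex via Lemma~\ref{le:convex}. Both are secured once the finite subcover of $K$ and the uniform $\delta$ are in hand.
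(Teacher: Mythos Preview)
Your argument is correct. The paper takes a more abstract two-line route: it observes that $\{g \in C^\infty(M,\R^d) \mid g(K) \subseteq N\} = A(K,N)$ is open in the compact-open topology and hence weak open by Lemma~\ref{le:wcco}, and then simply invokes Lemma~\ref{le:convex} (the weak topology on $C^\infty(M,\R^d)$ has a basis of convex sets) to conclude that some convex basic neighbourhood of $f$ sits inside $A(K,N)$. You instead construct that neighbourhood explicitly, bypassing the compact-open detour: you extract the uniform $\delta$ from compactness of $f(K)$ and write down the finite intersection of order-zero subbasic sets by hand. Your version is more concrete and self-contained (one actually sees the neighbourhood), while the paper's is a quick corollary of lemmas already on the shelf; the two are closely related once the paper's argument is unwound, and each is perfectly adequate for the purpose.
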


\begin{proof}
The set $\{ g \in \Ci(M,\R^d) \mid g(K) \sse N \}$ is open in the compact-open
topology on $\Ci(M, \R^d)$ and so is open in the weak topology by Lemma~\ref{le:wcco}.
By Lemma~\ref{le:convex}, the weak topology on $\Ci(M,\R^d)$ has a basis of convex sets.
Thus any $f : M \to N$ has such a convex basic set as a weak neighbourhood.
\end{proof}

\begin{theorem}\label{th:weak-scl}
Let $M$ and $N$ be smooth manifolds.
Suppose $p: \R \to C^\oo(M, N)$ is weak continuous and 
$t_r$ is a sequence of real numbers converging to zero.
Then there is a subsequence $t_{r_j}$ and a smooth curve $q: \R \to C^\oo(M, N)$ 
such that $q(2^{-j}) = p(t_{r_j})$ for each $j$ and $q(0) = p(0)$.
\end{theorem}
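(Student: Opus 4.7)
The plan is to adapt the Special Curve Lemma (Lemma~\ref{lem:scl}) to the function-space setting: build $q$ by smoothing out a piecewise-linear interpolation through the values $p(t_{r_j})$ using smooth bump functions in the time variable. First I would reduce to the case where $N$ is an open subset of $\R^d$ by choosing a Whitney embedding $N \hookrightarrow U \subseteq \R^d$ with $U$ an open tubular neighborhood and $r : U \to N$ a smooth retraction; it then suffices to produce a smooth $\tilde q : \R \to C^\infty(M, U)$ with $\tilde q(0) = p(0)$ and $\tilde q(2^{-j}) = p(t_{r_j})$, since $q := r \circ \tilde q$ inherits those values ($r$ fixes $N$).

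Fix a compact exhaustion $L_1 \subseteq L_2 \subseteq \cdots$ of $M$ with $L_i \subseteq L_{i+1}^o$ and $\cup_i L_i = M$ via Lemma~\ref{le:charts}(a). For each $j$, Lemma~\ref{le:app} supplies a convex basic weak $C^\infty(M, \R^d)$-neighborhood $W_j$ of $p(0)$ whose elements map $L_j$ into $U$; take the $W_j$ nested. Using weak continuity of $p$ at $0$, inductively pick $t_{r_j}$ so that $|t_{r_j}| < 2^{-j}$, $p(t_{r_j}) \in W_j$, and $p(t_{r_j}) - p(0)$ decays super-polynomially in each of the countably many Fr\'echet seminorms that define the weak topology (e.g.\ $\sup_{|i| \le j,\, x \in L_j} \|D^i(p(t_{r_j}) - p(0))(x)\| < 2^{-j^2}$ after pulling back through chart data from Lemma~\ref{le:charts}). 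Choose smooth bumps $\phi_j : \R \to [0,1]$ with $\phi_j(2^{-j}) = 1$, pairwise disjoint supports contracting around $2^{-j}$, and derivative bounds $\|\phi_j^{(k)}\|_\infty \le C_k 2^{jk}$.

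Define
\[
\tilde q(t)(x) := p(0)(x) + \sum_j \phi_j(t)\bigl(p(t_{r_j})(x) - p(0)(x)\bigr).
\]
The endpoint values $\tilde q(0) = p(0)$ and $\tilde q(2^{-j}) = p(t_{r_j})$ are immediate. Away from $t = 0$, disjointness of the supports makes $\bar{\tilde q}$ locally a finite sum of smooth terms; at $(0, x_0)$, the super-polynomial decay of $p(t_{r_j}) - p(0)$ on each $L_j$ swamps the $2^{jk}$ growth of $\phi_j^{(k)}$, so $\bar{\tilde q} : \R \times M \to \R^d$ is jointly smooth. For $t \in \supp \phi_j$, convexity of $W_j$ together with $p(0), p(t_{r_j}) \in W_j$ forces $\tilde q(t) \in W_j$, so $\tilde q(t)$ maps $L_j$ into $U$.

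The main obstacle will be showing $\tilde q(t)(x) \in U$ also for $x \notin L_j$, since Lemma~\ref{le:app} imposes no constraint there and weak continuity provides no uniform pointwise bound on non-compact $M$. I expect this is handled by strengthening the choice of $W_j$ via the continuous positive thickness $\epsilon_U(x) := \operatorname{dist}(p(0)(x), \partial U)$ of the tubular neighborhood: arrange each $W_j$ so that $|g(x) - p(0)(x)| < \epsilon_U(x)/2$ for every $g \in W_j$ and $x \in L_j$, which keeps the pointwise segment from $p(0)(x)$ to $p(t_{r_j})(x)$ inside $U$ there. The delicate $(t,x)$-pairs where the active bump index $j$ satisfies $x \notin L_j$ should be handled by a diagonal refinement of the subsequence exploiting the pointwise convergence $p(t_r)(x) \to p(0)(x)$ at each $x$ (valid because the weak topology refines the compact-open), combined with narrowing the supports of the $\phi_j$ against the rate of this convergence. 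Composing $\tilde q$ with $r$ then yields the required smooth curve $q : \R \to C^\infty(M, N)$.
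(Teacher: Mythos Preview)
Your reduction to the case of an open target in $\R^d$ via a tubular neighbourhood and smooth retraction is correct and matches the paper, and your scheme for extracting a fast-decaying subsequence and verifying joint smoothness of $\bar{\tilde q}$ at $t=0$ is essentially the same estimate the paper uses. The genuine gap is exactly the one you flag and do not close: for $t\in\supp\phi_j$ and $x\notin L_j$, nothing forces $\tilde q(t)(x)\in U$. Your proposed fixes cannot work. The $\epsilon_U$ refinement is still a statement only on $L_j$. The ``diagonal refinement exploiting pointwise convergence'' would require, for each $j$, a bound $|p(t_{r_j})(x)-p(0)(x)|<\epsilon_U(x)/2$ valid for \emph{all} $x\in M$, but weak continuity gives no control off compacta. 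Concretely, take $M=\R$, $N=S^1\subset\R^2$, $U$ a thin open annulus around $S^1$, and $p(t)(x)=(\cos(tx),\sin(tx))$; this $p$ is a plot, hence weak continuous, yet for any nonzero $t_{r_j}$ the point $x=\pi/t_{r_j}$ gives $p(t_{r_j})(x)=(-1,0)$ and $p(0)(x)=(1,0)$, whose midpoint $(0,0)$ lies outside $U$. Thus your $\tilde q$ leaves $U$ regardless of how the subsequence is thinned or how narrow the bump supports are made.

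The paper repairs this by using the \emph{entire} continuous path $p$, not just the sampled values $p(0)$ and $p(t_{r_j})$, and by letting the interpolation depend on $x$ through a partition of unity on $M$. For each compact $K_k$ in the cover of $M$, a Lebesgue-number argument (Lemma~\ref{le:app} plus compactness of $[0,1]$) yields $\delta_k>0$ such that any convex combination of values of $p$ taken over a time interval of length at most $2\delta_k$ maps $K_k$ into $N$. One then builds a separate smooth interpolant $q_k$ whose time nodes include both the points $2^{-j}$ and the multiples of $\delta_k$, so that on every inter-node interval $q_k(t)$ is such a short-window convex combination and hence maps $K_k$ into $N$. Finally, with a partition of unity $(\nu_k)$ subordinate to $(K_k^o)$, set $q(t)(x)=\sum_k\nu_k(x)\,q_k(t)(x)$; since only finitely many $k$ contribute at a given $x$ and the one with the largest $\delta_{k'}$ controls the rest, $q(t)(x)\in N$ for all $(t,x)$. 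The missing idea in your proposal is precisely this $x$-dependent, full-path interpolation glued by a partition of unity on $M$; a single global interpolation between $p(0)$ and the samples $p(t_{r_j})$ cannot succeed when $M$ is noncompact and the target is not convex.
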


\begin{proof}
We first reduce to the case where $N$ is open in $\R^d$.
As in Theorem~\ref{th:D-vs-strong}, choose an embedding $N \hookrightarrow \R^d$, 
and let $U$ be an open tubular neighborhood of $N$ in $\R^d$, 
so that the inclusion $i:N \ra U$ has a smooth retract $r:U \ra N$. 
By~\cite[Exercise 10(a), page~64]{Hi}, the map $\R \to \Ci(M, U)$ sending $t$ to $i \circ p(t)$ is weak continuous,
so if the Theorem holds for $\Ci(M, U)$, then there is a smooth curve $q: \R \to C^\oo(M, U)$ 
such that $q(2^{-j}) = i \circ p(t_{r_j})$ for each $j$ and $q(0) = i \circ p(0)$.
Then the map sending $t$ to $r \circ q(t)$ is smooth, 
$r \circ q(2^{-j}) = p(t_{r_j})$ for each $j$, and $r \circ q(0) = p(0)$,
so we are done.
Thus we may assume that $N$ is open in $\R^d$.

If $t_r$ is eventually constant, we may take $q$ to be a constant function, 
so suppose it is not.
Choose charts $(U_k,\phi_k)_{k=1}^\oo$ for $M$ and compact sets $K_k \sse U_k$ as described in Lemma~\ref{le:charts}(a). 
Let $f = p(0)$. For $j = 1,2,\dots$, the sets,
\[
  A_j = \cap_{k=1}^j N^j(f,(U_k,\phi_k),(\R^d,\id),K_k,2^{-(j+1)^2})
\]
are weak $C^\oo(M,\R^d)$-neighborhoods of $f$ so we may choose a 
strictly monotone subsequence $t_{r_j}$ such that $p(t_{r_j}) \in A_j$ for each $j$. 
Set $f_j = p(t_{r_j})$.
Now compose $p$ with a continuous function taking $2^{-j}$ to $t_{r_j}$ for each $j$ 
to obtain a weak continuous function $p_0$ that satisfies 
$p_0(2^{-j}) = f_j$ for $j = 1,2,\dots$ and $p_0(0) = f$. 

Fix $k$.
By Lemma~\ref{le:app}, for each $t \in [0,1]$, there is a convex neighborhood of $p_0(t)$ 
whose elements map $K_k$ into $N$.
By compactness, there is a $\delta_k > 0$ such that any subinterval of $[0,1]$ 
of length at most $2\delta_k$ is mapped by $p_0$ into one of these neighborhoods.
Thus, for each $t$, any convex combination of elements in 
$p([t-\delta_k,t+\delta_k] \cap [0,1])$ maps $K_k$ into $N$.
Let $\tau_0, \tau_1, \dots$ be the strictly decreasing sequence obtained by ordering the set 
$\{1, 1/2, 1/4,\dots\} \cup \{\delta_k, 2\delta_k, \dots, \lfloor1/\delta_k\rfloor\delta_k\}$.
Note that $\tau_0 = 1$ and $\tau_{j-1}-\tau_j \le \delta_k$ for $j = 1,2,\dots$.

Fix a non-decreasing $\mu \in C^\oo(\R,[0,1])$ such that $\mu=0$ in a neighborhood of 
$(-\oo,0]$ and $\mu=1$ in a neighborhood of $[1,\oo)$.
Let $\cM_\ell = 1 + 2 \max_{\ell'\le \ell} \max_{t\in[0,1]} |\mu^{(\ell')}(t)|$.

Define $q_k: \R \to C^\oo(M, \R^d)$ by $q_k(t) = p_0(0)$ for $t \le 0$, $q_k(t) = p_0(1)$ for $t \ge 1$, and 
\[
  q_k(t) = p_0(\tau_j) + \mu\left(\tfrac{t-\tau_j}{\tau_{j-1}-\tau_j}\right)(p_0(\tau_{j-1})-p_0(\tau_j))
\] 
for $\tau_j \le t \le \tau_{j-1}$, $j = 1,2,\dots$.
Note that for each $t \in (0,1]$, $q_k(t)$ is a convex combination of elements of 
$p_0([t-\delta_k,t+\delta_k] \cap [0,1])$.
Clearly, $q_k$ is constant on $(-\oo,0]$ and constant on $[1,\oo)$.
The choice of $\mu$ ensures that it is constant in a neighborhood of $\tau_{j-1}$ 
for each $j$ and smooth on $(\tau_j, \tau_{j-1})$.
Thus, $q_k$ is smooth on $(\R \setminus \{0\}) \times M$.
To see that it is also smooth at $t=0$, fix a positive integer $\kappa$, 
set $F = f \circ \phi_\kappa^{-1}$, $F_j = f_j \circ \phi_\kappa^{-1}$ for each $j$, 
and $Q(t,s) = q_k(t)(\phi_\kappa^{-1}(s)) - F(s)$.
It will suffice to show that all partial derivatives of $Q$ exist and equal zero on 
$S := \{0\} \times \phi_\kappa(K^o_\kappa)$.
Certainly $Q=0$ there and if $\cD$ is any composition of partial differentiation operators 
such that $\cD Q$ vanishes on $S$ then the partial derivative of $\cD Q$ 
with respect to any of $s_1,\dots,s_m$ also vanishes there.
To complete the induction, it is enough to show that the partial derivative of 
$\cD Q$ with respect to $t$ also vanishes on $S$. 

Where $Q$ is $C^\oo$, the order of mixed partials is unimportant, so 
$\cD Q = D_t^\ell D_s^i Q$ off $S$ for some $\ell \ge 0$ and some multi-index $i$.
Choose $J$ so that $2^{-J} < \delta_k$.
Then $2^{-J},2^{-J-1},2^{-J-2},\dots$ is a tail of the sequence $\tau_0,\tau_1,\dots$.
So if $j>J$ and $2^{-j} \le t\le 2^{1-j}$, then
\[
  q_k(t) = f_j + \mu(2^jt-1)\,(f_{j-1}-f_j)
\] 
and, for $s \in \phi_\kappa(U_\kappa)$, 
\[
(D_t^\ell D_s^i Q)(t,s) = 
  \begin{cases} 
    (D_s^i(F_j-F))(s) + \mu(2^jt-1)\,(D_s^i(F_{j-1}-F_j))(s),  & \ell = 0,\\
    \mu^{(\ell)}(2^jt-1) \, 2^{\ell j} \, (D_s^i(F_{j-1}-F_j))(s), & \ell \ge 1.
  \end{cases}
\]
If $j > \max(J,\kappa,|i|,\ell+2)$, then 
\[
  f_j \in A_j \sse N^j(f,(U_\kappa,\phi_\kappa),(\R^d,\id),K_\kappa,2^{-(j+1)^2}) \]
and
\[
  f_{j-1} \in A_{j-1} \sse N^{j-1}(f,(U_\kappa,\phi_\kappa),(\R^d,\id),K_\kappa,2^{-j^2}),
\]
so 
\[
  |D_s^i(F_j-F)| \le 2^{-(j+1)^2} \le 2^{-j^2}
  \quad\text{and}\quad
  |D_s^i(F_{j-1}-F)| \le 2^{-j^2}
\]
on $\phi_{\kappa}(K_\kappa)$.
Thus, for any $s \in \phi_k(K_\kappa^o)$,
\[
|(\cD Q)(t,s)-(\cD Q)(0,s)| = |(D_t^\ell D_s^iQ)(t,s)| \le \cM_\ell \, 2^{\ell j} \, 2^{-j^2} \le \cM_\ell \, t^2,
\]
where we have used that $\ell < j - 2$ in the last inequality.
Since $j$ can be arbitrarily large, this inequality holds for all sufficiently small $t$,
so the partial derivative of $\cD Q$ with respect to $t$ (from the right) exists and equals zero.
The partial derivative from the left is trivially zero.
This completes the induction and the proof that $q_k$ is smooth.

Before allowing $k$ to vary, observe that $q_k(\tau_j) = p_0(\tau_j)$ for each $j$ and, 
in particular, $q_k(2^{-j}) = p_0(2^{-j}) = f_j$ for each $j$.

Let $(\nu_k)_{k=1}^\oo$ be a smooth partition of unity on $M$ with 
$\nu_k$ supported in $K_k^o$ and define $q$ by $q(t)(x) = \sum_{k=1}^\oo \nu_k(x) q_k(t)(x)$.
Then $q: \R \to C^\oo(M,\R^d)$ is a smooth curve such that $q(2^{-j}) = f_j = p(t_{r_j})$ 
for each $j$, and of course $q(0)=p(0)$.
It remains to show that $q(t)$ takes values in $N$ for each $t \in \R$.
Let $x \in M$.
There are finitely many $k$ such that $\nu_k(x) \ne 0$; 
among them, choose $k'$ so that $\delta_{k'}$ is as large as possible.
Then, for any $t$ and any $k$ such that $\nu_k(x) \ne 0$, $q_k(t)$ is a convex combination 
of elements of $p_0([t - \delta_{k'}, t + \delta_{k'}] \cap [0,1])$.
Thus, $\sum_{k=1}^\oo \nu_k(x) q_k(t)$ is also a convex combination of elements 
of $p_0([t-\delta_{k'}, t+\delta_{k'}] \cap [0,1])$ and therefore maps $K_{k'}$ to $N$.
But $\nu_{k'}(x) \ne 0$, so $x \in K_{k'}$.
Hence,  $\sum_{k=1}^\oo \nu_k(x) q_k(t)(x) \in N$, that is, $q(t)(x) \in N$.
We conclude that $q: \R \to C^\oo(M,N)$.
This completes the proof.
\end{proof}

\end{document}